\def \1{{\bf 1}}
\def \al{\alpha}
\def \bs{\backslash}
\def \C{{\mathbb C}}
\def \CA{{\cal A}}
\def \CB{{\cal B}}
\def \CE{{\cal E}}
\def \CF{{\cal F}}
\def \CH{\mathcal H}
\def \CL{{\cal L}}
\def \CN{{\cal N}}
\def \CO{{\cal O}}
\def \CR{{\cal R}}
\def \CS{{\cal S}}
\def \CU{\mathcal U}
\def \cont{\mathrm{cont}}
\def \der{\mathrm{der}}
\def \eps{\varepsilon}
\def \Ga{\Gamma}
\def \GL{\operatorname{GL}}
\def \ga{\gamma}
\def \g{\mathfrak g}
\def \Hom{\operatorname{Hom}}
\def \Id{{\rm Id}}
\def \Im{\operatorname{Im}}
\def \Ind{\operatorname{Ind}}
\def \la{\lambda}
\def \Lie{\operatorname{Lie}}
\def \mqed{\tag*\qedhere}
\def \N{{\mathbb N}}
\def \ol{\overline}
\def \op{\mathrm{op}}
\def \Om{\Omega}
\def \Q{{\mathbb Q}}
\def \R{{\mathbb R}}
\def \SL{\operatorname{SL}}
\def \T{{\mathbb T}}
\def \tr{\operatorname{tr}}
\def \triv{\mathrm{triv}}
\def \what{\widehat}
\def \({\left(}
\def \){\right)}
\newcommand{\e}
[1]{\emph{#1}\index{#1}}
\newcommand{\mat}
[4]{\(\begin{matrix}#1 & #2 \\ #3 & #4\end{matrix}\)}
\newcommand{\norm}
[1]{\left|\hspace{-1pt}\left|#1\right|\hspace{-1pt}\right|}
\newcommand{\smat}
[4]{\(\begin{smallmatrix}#1 & #2 \\ #3 & #4\end{smallmatrix}\)}
\renewcommand{\sp}
[1]{\left\langle #1\right\rangle}
\newcommand{\tto}
[1]{\stackrel{#1}{\longrightarrow}}
\newtheorem{theorem}{Theorem}[section]
\newtheorem{lemma}[theorem]{Lemma}
\newtheorem{proposition}[theorem]{Proposition}
\newtheorem{exmples}[theorem]{Examples}
\newenvironment{examples}[0]{\begin{exmples}{\ }\\ 	\vspace{-20pt}\nopagebreak[4]
	\begin{itemize}\rm}{\end{itemize}\end{exmples}\vspace{5pt}}
\theoremstyle{definition}
\newtheorem{definition}[theorem]{Definition}
\newtheorem{example}[theorem]{Example}
\newtheorem{remark}[theorem]{Remark}
\begin{document}

\pagestyle{myheadings} \markright{TRACE CLASS GROUPS}

\title{Trace class groups\\ \ \\ \small
Journal of Lie Theory 26, No. 1, 269-291 (2016)}
\author{Anton Deitmar \& Gerrit van Dijk}
\date{}
\maketitle

{\bf Abstract:}
A representation $\pi$ of a locally compact group $G$ is called \e{trace class}, if for every test function $f$ the induced operator $\pi(f)$ is a trace class operator. The group $G$ is called \e{trace class}, if every $\pi\in\what G$ is trace class.
In this paper we give a survey of what is known about trace class groups and ask for a simple criterion to decide whether a given group is trace class.
We show that trace class groups are type I and give a criterion for semi-direct products to be trace class and show that a representation $\pi$ is trace class if and only if $\pi\otimes\pi'$ can be realized in the space of distributions.

$$ $$

\tableofcontents

\newpage
\section*{Introduction}

Let $G$ be a locally compact group and $\pi$ an irreducible unitary representation.
Let $f\in C_c^\infty(G)$ be a test function and form the operator $\pi(f)=\int_Gf(x)\pi(x)\,dx$.
For applications of the trace formula, it is 
often necessary to evaluate the trace of $\pi(f)$.
For this one first has to make sure that all operators $\pi(f)$ are indeed trace class.
We call the group $G$ a \e{trace class group} if $\pi(f)$ is always trace class. 
It has been shown by Kirillov \cite{Kiri}, that nilpotent Lie groups are trace class and by Harish-Chandra \cite{Hari}, that every semi-simple Lie group with finitely many connected components and finite center is trace class.

In the present paper we give a survey of what is known about trace class groups. We show that trace class groups are type I, but that the converse does not hold.
We also show that the trace class property persists in certain exact sequences.
We present a proof that being trace class is equivalent to the fact that every irreducible unitary representation can be realized in the space of distributions.
In the case of discrete groups, we give a full classification of trace class groups.

We thank Karl-Hermann Neeb for his helpful comments including a different proof of Proposition \ref{lem1.3}.
We also thank the referee for a formidable job.
He has read the paper very carefully and has given us numerous hints leading to improvements of the paper.
For instance, the current proof of Theorem \ref{thm1.7} is due to the referee, as is the converse direction of the proof of Proposition \ref{prop1.9} (b).

\section{Trace class}

On any locally compact group $G$ there is an accepted notion of a space $C_c^\infty(G)$ of test functions given by Bruhat in \cite{Bruhat}. 
This space comes with a natural topology making it a locally convex space.
For the convenience of the reader we will briefly repeat the definition of the space and its topology.

\begin{definition}
First, if $L$ is a Lie group, then $C_c^\infty(L)$ is defined as the space of all infinitely differentiable functions of compact support on $L$.
The space $C_c^\infty(L)$ is the inductive limit of all $C_K^\infty(L)$, where $K\subset L$ runs through all compact subsets of $L$ and $C_K^\infty(L)$ is the space of all smooth functions supported inside $K$. The latter is a Fr\'echet space equipped with the supremum norms over all derivatives. Then $C_c^\infty(L)$ is equipped with the inductive limit topology in the category of locally convex spaces as defined in \cite{Schaef}, Chap II, Sec. 6.

Next, suppose the locally compact group $H$ has the property that $H/H^0$ is compact, where $H^0$ is the connected component.
Let $\CN$ be the family of all normal closed subgroups $N\subset H$ such that $H/N$ is a Lie group with finitely many connected components.
We call $H/N$ a \e{Lie quotient} of $H$.
Then, by \cite{MZ}, the set $\CN$ is directed by inverse inclusion and 
$$
H\cong \lim_{\substack{\leftarrow\\ N}}H/N,
$$
where the inverse limit runs over the set $\CN$.
So $H$ is a projective limit of Lie groups.
The space $C_c^\infty(H)$ is then defined to be the sum of all spaces $C_c^\infty(H/N)$ as $N$ varies in $\CN$.
Then $C_c^\infty(H)$ is the inductive limit over all $C_c^\infty(L)$ running over all Lie quotients $L$ of $H$ and so $C_c^\infty(H)$ again is equipped with the inductive limit topology in the category of locally convex spaces.

Finally to the general case.
By \cite{MZ} one knows that every locally compact group $G$ has an open subgroup $H$ such that $H/H^0$ is compact, so $H$ is a projective limit of connected Lie groups in a canonical way.
A Lie quotient of $H$ then is called a \e{local Lie quotient} of $G$.
We have the notion $C_c^\infty(H)$ and for any $g\in G$ we define $C_c^\infty(gH)$ to be the set of functions $f$ on the coset $gH$ such that $x\mapsto f(gx)$ lies in $C_c^\infty(H)$.
We then define $C_c^\infty(G)$ to be the sum of all $C_c^\infty(gH)$, where $g$ varies in $G$.
Then $C_c^\infty(G)$  is the inductive limit over all finite sums of the spaces $C_c^\infty(gH)$.
Note that the definition is independent of the choice of $H$, since, given a second open group $H'$, the support of any given $f\in C_c(G)$ will only meet finitely many left cosets $gH''$ of the open subgroup $H''=H\cap H'$.
It follows in particular, that $C_c^\infty(G)$ is the inductive limit over a family of Fr\'echet spaces.
This concludes the definition of the space $C_c^\infty(G)$ of test functions.

\begin{remark}
\begin{enumerate}[\rm (a)]
\item Note that the inductive limit topology in the category of locally convex spaces differs from the inductive limit topology in the category of topological spaces, as is made clear in \cite{Gloeck}.
\item Note that for a linear functional $\al:C_c^\infty(G)\to\C$ to be continuous, it suffices, that for any 
local Lie quotient $L$ of $G$ and any compact subset $K\subset L$ and any sequence $f_n\in C_K^\infty(L)$ with $f_n\to 0$ in the Fr\'echet space $C_K^\infty(L)$ and every $g\in G$ the sequence $\al(L_gf_n)$ tends to zero, where $L_gf(x)=f(g^{-1}x)$.
This is deduced from \cite{Schaef}, Chap II, Sec. 6.1.
\item If a locally compact group $G$ is a projective limit of Lie groups $G_j=G/N_j$, then it follows from \cite{Neeb}*{Cor. 12.3}, that every irreducible continuous representation $\pi$ factors through some $G_j$. This reduces many issues related to distribution characters to the case of Lie groups.
\end{enumerate}
\end{remark}
\end{definition}

\begin{definition}
A unitary representation $\pi$ of a locally compact group $G$ is called a \e{trace class representation}, if every operator $\pi(f)$ for $f\in C_c^\infty(G)$ is trace class.

We say that a locally compact group $G$ is a \e{trace class group}, if every irreducible unitary representation $\pi$ is trace class.
\end{definition}

\begin{proposition}\label{lem1.3}
Let $G$ be a locally compact group.
\begin{enumerate}[\quad\rm (a)]
\item Let $F$ be a normed $\C$-vector space and let $T:C_c^\infty(G)\to F$ be a linear map which is the point-wise limit of a  net of continuous linear maps $T_\al\to T$ such that for each $f\in C_c^\infty(G)$ one has
$$
\sup_\al\norm{T_\al(f)}<\infty.
$$
Then $T$ is continuous. 

Likewise, a semi-norm $N$ on $C_c^\infty(G)$, which is the supremum of a family of continuous semi-norms, is continuous.
\item  Let $\pi$ a trace class representation of $G$.
Then the ensuing linear functional
\begin{align*}
C_c^\infty(G)&\to\C,\qquad f\mapsto\tr\pi(f)
\end{align*}
is continuous.

Likewise, the map $f\mapsto \norm{\pi(f)}_{HS}$ mapping $f$ to the Hilbert-Schmidt norm of $\pi(f)$ is continuous.
\end{enumerate}
\end{proposition}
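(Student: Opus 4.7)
My plan is to deduce part (a) from the Banach--Steinhaus theorem, using the fact that $C_c^\infty(G)$ is barrelled. Indeed, by the preceding definition, $C_c^\infty(G)$ is constructed as an inductive limit of Fr\'echet spaces in the \emph{locally convex} category; since Fr\'echet spaces are barrelled and barrelledness is preserved under locally convex inductive limits, $C_c^\infty(G)$ itself is barrelled. On a barrelled space, any pointwise bounded family of continuous linear maps into a normed space is equicontinuous, and hence any pointwise limit of such a family is continuous. The hypothesis $\sup_\alpha \|T_\alpha(f)\| < \infty$ is precisely pointwise boundedness of the net $(T_\alpha)$, so the first statement follows.

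For the semi-norm version, if $N = \sup_\alpha N_\alpha$ with each $N_\alpha$ continuous, then the unit ball $\{f : N(f) \leq 1\} = \bigcap_\alpha \{f : N_\alpha(f) \leq 1\}$ is closed, absolutely convex, and (since $N$ is finite-valued by assumption) absorbing. It is therefore a barrel, and in a barrelled space every barrel is a neighbourhood of zero, so $N$ is continuous.

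For part (b), I would fix an orthonormal basis $(e_i)$ of the representation space and use part (a) on the truncated trace functionals
$$
T_n(f) = \sum_{i=1}^n \langle \pi(f) e_i, e_i \rangle.
$$
Each $T_n$ is a finite sum of pairings of $f$ with bounded continuous matrix coefficients $x \mapsto \langle \pi(x) e_i, e_i \rangle$, and such integration defines a continuous linear functional on every $C_K^\infty(L)$, hence on the inductive limit $C_c^\infty(G)$. Since $\pi(f)$ is trace class, $T_n(f) \to \tr \pi(f)$ pointwise, so $\sup_n |T_n(f)| < \infty$ for each fixed $f$, and part (a) delivers continuity of $f \mapsto \tr \pi(f)$. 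For the Hilbert--Schmidt norm, the semi-norms $N_n(f) = \bigl(\sum_{i=1}^n \|\pi(f) e_i\|^2\bigr)^{1/2}$ are continuous (since $f \mapsto \pi(f) e_i$ is continuous into the Hilbert space via the estimate $\|\pi(f) e_i\| \leq \|f\|_{L^1}$ and the continuity of $C_K^\infty(L) \hookrightarrow L^1$), and $\|\pi(f)\|_{HS} = \sup_n N_n(f)$ is a pointwise finite supremum, to which the semi-norm assertion of (a) applies.

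The point that requires the most care is verifying the barrelled property of $C_c^\infty(G)$ at the level of generality of the paper: the space is a locally convex inductive limit over finite sums of spaces $C_c^\infty(gH)$, each of which is itself an inductive limit of Fr\'echet spaces $C_c^\infty(L)$ for Lie quotients $L$, each in turn built from $C_K^\infty(L)$. As the remark after the definition flags explicitly, one must remain in the locally convex category at every stage, so that barrelledness propagates through each inductive limit and the Banach--Steinhaus argument applies.
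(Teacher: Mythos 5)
Your proposal is correct and follows essentially the same route as the paper: both rest on the Banach--Steinhaus/uniform boundedness principle applied to the locally convex inductive limit structure of $C_c^\infty(G)$, with part (b) reduced to part (a) via the truncated trace functionals and the truncated Hilbert--Schmidt semi-norms. The only difference is one of packaging: you invoke barrelledness of the whole space $C_c^\infty(G)$ (correctly, since barrelledness passes to locally convex inductive limits of Fr\'echet spaces), whereas the paper checks equicontinuity stage-by-stage on the Fr\'echet pieces $C_K^\infty(L)$ and cites the fact that continuity of a semi-norm on an inductive limit can be tested on each stage.
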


\begin{proof}
(a) 
Let $T_\al\to T$ be a point-wise convergent net, where each $T_\al$ is continuous.
Let $L$ be a local Lie quotient of $G$, let $K\subset L$ be a compact subset and let $g\in G$.
We can consider the space $C_K^\infty(L)$ as a subspace of $C_c^\infty(G)$.
By the remark above, it suffices to show that $T$ is continuous on $L_gC_K^\infty(L)$, or $T\circ L_g$ is continuous on $C_K^\infty(L)$.
As for each $f\in C_K^\infty(L)$ we have
$
\sup_\al|T_\al(f)|<\infty,
$
the Banach-Steinhaus-Theorem, (12.16.4) in \cite{Dieu} implies that the set of all $T_\al$ is equicontinuous on the Fr\'echet space $C_K^\infty(L)$.
By the uniform boundedness principle, see Chap III, 4.3 in \cite{Schaef} it follows that $T$ is continuous.

Next let $N=\sup_\al N_\al$ be a semi-norm, where the $N_\al$ are continuous semi-norms.
Then by \cite{Dieu} (12.7.7) the map $N$ is lower semicontinuous and by \cite{Dieu}(12.16.3) the map $N$ is continuous on $C_c^\infty(H)$ for each local Lie quotient $H$.
But a semi-norm on a locally-convex inductive limit is continuous if and only if it is continuous on each stage, see \cite{Bier}, Section 0.

(b) Let $(\pi,V_\pi)$ be a trace class representation.
Let $(e_i)_{i\in I}$ be an orthonormal basis of $V_\pi$.
For each finite subset $E\subset I$ let 
$T_E:C_c^\infty(G)\to \C$ be defined by
$$
T_E(f)=\sum_{i\in E}\sp{\pi(f)e_i,e_i}.
$$
Then for each $f\in C_c^\infty(G)$ we have
$$
\tr\pi(f)=\lim_{E\to I} T_E(f).
$$
So $f\mapsto \tr\pi(f)$ is the point-wise limit of the continuous linear maps $T_E$, further we have for each $f\in C_c^\infty(G)$  that
$$
\sup_E|T_E(f)|\le\sup_E\sum_{i\in E}|\sp{\pi(f)e_i,e_i}|\le \sum_i|\sp{\pi(f)e_i,e_i}|<\infty,
$$
as $\pi(f)$ is trace class.
By part (a) it follows that $f\mapsto\tr\pi(f)$ is continuous on $C_c^\infty(G)$.
The last statement on the Hilbert-Schmidt norm follows similarly, as $\norm{\pi(f)}_{HS}=\sup_E\(\sum_{i\in E}\norm{\pi(f)e_i}^2\)^{\frac12}$.
\end{proof}

\begin{examples}\label{Ex1}
\item Trivial examples of trace class groups include abelian and compact groups as their irreducible representations are finite-dimensional.
\item A representation $\pi$ of a discrete group $\Ga$ is trace class if and only if it is finite-dimensional.
To see the non-trivial direction, note that the function $f=\1_{\{1\}}$ is a test function, so for a trace class representation $\pi$, the operator $\pi(f)=\Id_\pi$ has to be trace class, which means that $\pi$ is finite-dimensional.
\end{examples}

For a topological group $G$ we denote by $\what G$ the set of isomorphism classes of irreducible unitary representations of $G$.

\begin{proposition}\label{prop1.6}
Let $G$ be a locally compact group and let $\pi\in\what G$.
Suppose that for every test function $f$ the operator $\pi(f)$ is Hilbert-Schmidt. Then $\pi(f)$ is trace class for every $f\in C_c^\infty(G)$.
\end{proposition}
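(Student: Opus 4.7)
My plan is to exploit the classical fact that the product of two Hilbert-Schmidt operators is trace class, and to realize $\pi(f)$ as such a product (or a finite sum of such products) by factorizing $f$ as a convolution of two test functions.

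The key ingredient is a Dixmier-Malliavin type factorization: on a Lie group with finitely many connected components, every element of $C_c^\infty(L)$ can be written as a finite sum $\sum_{i=1}^n g_i * h_i$ with $g_i,h_i\in C_c^\infty(L)$. To apply this in our setting I would first unwind the definition of $C_c^\infty(G)$ and reduce to the Lie case. Given $f\in C_c^\infty(G)$, choose an open subgroup $H\subset G$ with $H/H^0$ compact; then $f$ is a finite sum of functions of the form $L_g\tilde f$ with $\tilde f\in C_c^\infty(gH)$, and $\pi(L_g\tilde f)=\pi(g)\pi(\tilde f)$ where $\pi(g)$ is unitary and hence bounded. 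So it is enough to handle $\tilde f\in C_c^\infty(H)$. Since $\CN$ is directed under reverse inclusion, $\tilde f$ lies in $C_c^\infty(L)$ for some single Lie quotient $L=H/N$ of $H$, and we may regard the relevant computations as taking place on $L$.

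On $L$, Dixmier-Malliavin furnishes a decomposition $\tilde f=\sum_{i=1}^n g_i*h_i$ with $g_i,h_i\in C_c^\infty(L)\subset C_c^\infty(G)$. Since the integrated representation satisfies $\pi(g_i*h_i)=\pi(g_i)\pi(h_i)$, we get
$$
\pi(\tilde f)=\sum_{i=1}^n \pi(g_i)\pi(h_i).
$$
By hypothesis each $\pi(g_i)$ and each $\pi(h_i)$ is Hilbert-Schmidt, so each summand is a product of two Hilbert-Schmidt operators and hence of trace class; the finite sum and multiplication by the unitary $\pi(g)$ preserves the trace class property, so $\pi(f)$ is trace class.

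The main obstacle is the reduction step from the general locally compact group $G$ to a Lie group, since one must verify that the convolution factorization, carried out on the Lie quotient $L$, really represents the convolution on $G$ as seen by $\pi$. This is where Remark (c) after the definition of $C_c^\infty(G)$—which says that every irreducible $\pi$ factors through a Lie quotient—is crucial, as it guarantees that integrating $g_i$ and $h_i$ against $\pi$ on $G$ coincides with the corresponding integration on the Lie group on which they are genuinely supported, so the identity $\pi(g_i*h_i)=\pi(g_i)\pi(h_i)$ is legitimate. Once that matching is in place, the argument above is purely operator-theoretic.
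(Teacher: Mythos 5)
Your proposal is correct and is essentially the paper's own argument: both rest on the Dixmier--Malliavin factorization $C_c^\infty(G)*C_c^\infty(G)=C_c^\infty(G)$ together with the fact that a product of two Hilbert--Schmidt operators is trace class. You merely spell out the reduction from a general locally compact group to a Lie quotient, which the paper dismisses with ``it follows easily by the definition of $C_c^\infty(G)$.''
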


\begin{proof}[Proof]
As the product of two Hilbert-Schmidt operators is trace class, this assertion follows from 
$$
C_c^\infty(G)*C_c^\infty(G)=C_c^\infty(G),
$$
where the left hand side expression stands for the space of all sums of the form $\sum_{j=1}^nf_j*g_j$, where $f_j,g_j\in C_c^\infty(G)$.
This latter assertion has been shown for Lie groups by Dixmier and Malliavin in \cite{DM}.
From there it follows easily by the definition of $C_c^\infty(G)$.
\end{proof}

A topological space $X$ is called \e{separable} if it contains a countable dense subset.
Recall that a topological group $G$ is called \e{type I}, if every factor representations is a multiple of an irreducible one.

The following theorem generalizes the claims in \cite{Funa}.

\begin{theorem}\label{thm1.7}
Any trace class group is of type I.
\end{theorem}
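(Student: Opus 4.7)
The plan is to prove that the group $C^*$-algebra $C^*(G)$ is liminal (CCR), i.e.\ that every irreducible $*$-representation of $C^*(G)$ sends $C^*(G)$ into the compact operators $\mathcal{K}(H_\pi)$. Liminal $C^*$-algebras are type I, and for a locally compact group $G$ the $C^*$-algebraic notion of type I is equivalent to the factor-representation definition used here (Dixmier). So it suffices to establish the liminal property of $C^*(G)$.

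The first step is the immediate observation that for every $\pi\in\what G$ and every $f\in C_c^\infty(G)$ the operator $\pi(f)$ is trace class by hypothesis, hence compact. The second step is to pass from test functions to $L^1(G)$ and then to $C^*(G)$. I would invoke density of $C_c^\infty(G)$ in $L^1(G)$: given $g\in L^1(G)$, pick $f_n\in C_c^\infty(G)$ with $\norm{f_n-g}_1\to 0$; then contractivity $\norm{\pi(f_n)-\pi(g)}_{\mathrm{op}}\le\norm{f_n-g}_1$ gives norm convergence in $B(H_\pi)$, and since the compacts form a norm-closed subspace, $\pi(g)\in\mathcal{K}(H_\pi)$. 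Since the image of $L^1(G)$ is norm-dense in the image of $C^*(G)$, this extends to $\pi(C^*(G))\subset\mathcal{K}(H_\pi)$.

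Having shown that every irreducible representation of $C^*(G)$ lands in the compact operators, $C^*(G)$ is liminal by definition, hence type I, hence $G$ is type I.

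The main obstacle I anticipate is justifying density of $C_c^\infty(G)$ in $L^1(G)$ for a general, possibly non-Lie locally compact group $G$. One has to unpack the definition of $C_c^\infty(G)$ from Section~1: pick an open subgroup $H\subset G$ with $H/H^0$ compact, write $G$ as a disjoint union of left cosets of $H$, express each $H$ as a projective limit of Lie quotients $H/N$, and then combine standard $C^\infty$ approximation on each Lie quotient with the fact that $L^1$-functions on $G$ can be approximated by ones supported on finitely many cosets and factoring through some quotient. A secondary point — harmless but worth flagging — is invoking the equivalence between the paper's factor-representation definition of type I and the $C^*$-algebraic one.
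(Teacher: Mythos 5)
Your argument is essentially identical to the paper's proof: both show that every $\pi\in\what G$ maps $C_c^\infty(G)$ into trace class, hence compact, operators, then use density of $C_c^\infty(G)$ in $L^1(G)$ together with the contractivity $\norm{\pi(f)}_{\mathrm{op}}\le\norm{f}_1$ and the construction of $C^*(G)$ as a completion of $L^1(G)$ to conclude that $G$ is liminal and therefore of type I. The two points you flag (density of $C_c^\infty(G)$ in $L^1(G)$ for general locally compact $G$, and the equivalence of the factor-representation and $C^*$-algebraic notions of type I) are exactly the points the paper also leaves to the references, so nothing is missing relative to the published argument.
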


\begin{proof}
Let $G$ be a trace class group and let $(\pi,V_\pi)$ be an irreducible unitary representation.
Then for every $f\in C_c^\infty(G)$ the operator $\pi(f)$ is trace class and hence compact.
Since $\pi:L^1(G)\to \CB(V_\pi)$ is continuous and $C_c^\infty(G)$ is dense in $L^1(G)$, it follows that all operators $\pi(f)$ for $f\in L^1(G)$ are compact.
Since \cite{Dix}*{Section 13.9} the group $C^*$-algebra $C^*(G)$ is the completion of $L^1(G)$ with respect to the norm $\norm f_*=\sup_{\pi\in\what G}\norm{\pi(f)}_\op$, it follows that $\pi(f)$ is compact for every $f\in C^*(G)$ and every irreducible $*$-representation of $C^*(G)$.
This means that the group $G$ is liminal, hence postliminal [loc.cit.], i.e. of type I. 
\end{proof}

\begin{lemma}\label{lem1.8}
Let $X$ be a locally compact Hausdorff space equipped with a $\sigma$-finite Radon measure $dx$.
Let $\CA$ be a *-subalgebra of $\CB(L^2(X))$.
Assume that each $T\in\CA$ has a continuous kernel $k_T\in C(X\times X)$, such that $k_{TS}=k_T*k_S$ and $k_{S^*}=k_S^*$ hold pointwise. Assume further that $\CA=\CA^2$, where $\CA^2$ is the space of finite sums $\sum_{j=1}^nT_jS_j$ with $T_j,S_j\in\CA$.
Then the following are equivalent
\begin{enumerate}[\quad\rm (a)]
\item Each $T\in\CA$ is trace class.
\item For each $T\in\CA$ the integral 
$$
\int_X k(x,x)\,dx
$$
converges.
\end{enumerate}
If this is the case, one has $\tr(T)=\int_X k(x,x)\,dx$
for every $T\in\CA$.
\end{lemma}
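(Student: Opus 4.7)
The plan is to exploit the $*$-algebra structure together with the kernel convolution identity $k_{TS}(x,y)=\int k_T(x,z)k_S(z,y)\,dz$ and the involution $k_{S^*}(x,y)=\overline{k_S(y,x)}$ to translate condition (b) into a Hilbert--Schmidt statement. The crucial calculation is
\[
k_{TT^*}(x,x)=\int_X k_T(x,z)\,k_{T^*}(z,x)\,dz=\int_X|k_T(x,z)|^2\,dz,
\]
so that $\int_X k_{TT^*}(x,x)\,dx=\|k_T\|_{L^2(X\times X)}^2$, which is the Hilbert--Schmidt norm squared of $T$ whenever finite.

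For the direction (b)$\Rightarrow$(a), I would argue as follows. Since $\CA$ is a $*$-algebra, $TT^*\in\CA$ for every $T\in\CA$, so (b) applied to $TT^*$ yields $\|T\|_{HS}^2<\infty$; thus every element of $\CA$ is Hilbert--Schmidt. The hypothesis $\CA=\CA^2$ now lets me write any $T\in\CA$ as a finite sum $T=\sum_{j=1}^n T_jS_j$ with $T_j,S_j\in\CA$, hence each factor is Hilbert--Schmidt, each product $T_jS_j$ is trace class, and therefore $T$ is trace class.

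For (a)$\Rightarrow$(b) together with the trace formula, I assume every $T\in\CA$ is trace class and hence Hilbert--Schmidt, so each $k_T\in L^2(X\times X)$. Writing again $T=\sum_{j=1}^n T_jS_j$, the kernel convolution identity gives
\[
k_T(x,x)=\sum_{j=1}^n\int_X k_{T_j}(x,z)\,k_{S_j}(z,x)\,dz.
\]
Cauchy--Schwarz in $z$ and then in $x$ produces the absolute bound
\[
\int_X|k_T(x,x)|\,dx\;\le\;\sum_{j=1}^n\|T_j\|_{HS}\|S_j\|_{HS}<\infty,
\]
so $\int_X k_T(x,x)\,dx$ converges absolutely. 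Fubini's theorem together with the classical identity $\tr(AB)=\int\!\!\int k_A(x,y)\,k_B(y,x)\,dx\,dy$ for Hilbert--Schmidt operators then gives
\[
\int_X k_T(x,x)\,dx=\sum_{j=1}^n\tr(T_jS_j)=\tr(T),
\]
which proves both (b) and the trace formula simultaneously.

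The argument is mostly bookkeeping once the two kernel identities are combined with the factorization $\CA=\CA^2$; the one point requiring real care is the justification of Fubini on continuous but not a priori integrable kernels, which is precisely what the Cauchy--Schwarz bound above supplies. A minor subtlety is to note that an operator with a continuous kernel that is additionally in $L^2(X\times X)$ really has that continuous kernel as its Hilbert--Schmidt integral kernel (and not merely an a.e.\ modification), which follows because continuous functions agreeing almost everywhere on a locally compact Hausdorff space with a Radon measure of full support must coincide.
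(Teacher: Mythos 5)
Your proof is correct and follows essentially the same route as the paper's: reduce everything to the identity $\int_X k_{TT^*}(x,x)\,dx=\norm{T}_{HS}^2$ coming from the kernel convolution and involution rules, and use $\CA=\CA^2$ to pass between Hilbert--Schmidt and trace class. The only cosmetic difference is that the paper handles a general product $T=S_1S_2^*$ by polarization down to the case $T=SS^*$, whereas you work with the decomposition $T=\sum_j T_jS_j$ directly via Cauchy--Schwarz and the identity $\tr(AB)=\int\!\!\int k_A(x,y)k_B(y,x)\,dx\,dy$; both amount to the same computation.
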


\begin{proof}
(a)$\Rightarrow$(b) It suffices to assume that $T=S_1S_2$ and replacing $S_2$ with $S_2^*$ we can assume that $T=S_1S_2^*$.
The map $\sigma:(S_1,S_2)\mapsto S_1S_2^*$ is sesquilinear, so it obeys the polarization rule
$$
\sigma(S,R)=\frac14\(\sigma(S+R)-\sigma(S-R)+i(\sigma(S+iR)-\sigma(S-iR))\),
$$
where we have written $\sigma(S+R)$ for the diagonal $\sigma(S+R,S+R)$.
This implies that in order to show the lemma, it suffices to assume $T=SS^*$ for some $S\in\CA$.
As $k_{S^*}(x,y)=k_S(y,x)^*$ we conclude that
\begin{align*}
\tr(T)&=\tr(SS^*)=\norm S_{HS}^2=\int_X\int_X
\norm{k_S(x,y)}_{HS}^2\,dx\,dy\\
&=\int_X\int_X \tr (k_S(x,y)k_{S^*}(y,x))\,dy\,dx\\
&=\int_X\tr k_T(x,x)\,dx.
\end{align*}
(b)$\Rightarrow$(a) The calculation above, read backwards, yields that each $T\in\CA$ is Hilbert-Schmidt.
Now the fact that $\CA=\CA^2$ shows that each $T\in\CA$ is trace class.
\end{proof}

\begin{proposition}\label{prop1.7}
\begin{enumerate}[\quad\rm (a)]
\item Every connected nilpotent Lie group is trace class.
\item For $n\ge 2$ the group $\SL_n(\R)\ltimes\R^n$ is not trace class.
Therefore, there exists a connected unimodular Lie group of type I, which is not trace class.
\end{enumerate}
\end{proposition}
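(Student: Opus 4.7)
The plan is to treat the two parts separately: (a) is Kirillov's theorem, while (b) is proved by constructing an explicit irreducible representation whose integral kernel blows up on the diagonal.

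For (a), by Kirillov's orbit method \cite{Kiri} every irreducible unitary representation $\pi$ of a connected simply connected nilpotent Lie group $N$ is equivalent to $\Ind_M^N\chi$ for some character $\chi$ of a polarizing subgroup $M$. In the standard realization on $L^2(N/M)\cong L^2(\R^k)$ via exponential coordinates on a cross-section, the operator $\pi(f)$ for $f\in C_c^\infty(N)$ is an integral operator with Schwartz kernel, hence trace class. The case of a general connected nilpotent Lie group reduces to the simply connected one by lifting irreducible representations to the universal cover.

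For (b), first observe that $G:=\SL_n(\R)\ltimes\R^n$ is connected and unimodular (both factors are, and $\SL_n(\R)$ preserves Lebesgue measure on $\R^n$), and that $G$ is of type I by Mackey's criterion: the $\SL_n(\R)$-orbits on $\what{\R^n}$ are $\{0\}$ and $\R^n\sm\{0\}$, both locally closed, and the stabilizers are inductively of type I (base case $n=2$, stabilizer $\R$). To produce a non-trace-class representation, fix $\xi=e_1^*\in\what{\R^n}$, set $K:=\mathrm{Stab}_{\SL_n(\R)}(\xi)\cong\SL_{n-1}(\R)\ltimes\R^{n-1}$, and define
\[
\pi\;:=\;\Ind_{K\ltimes\R^n}^{G}\bigl(\triv_K\otimes\chi_\xi\bigr),
\]
which is irreducible by Mackey. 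Realize $\pi$ on $L^2(\R^n\sm\{0\},dy)$. A Frobenius-type computation gives, for $f=f_H\otimes f_A$ with $f_H\in C_c^\infty(\SL_n(\R))$ and $f_A\in C_c^\infty(\R^n)$, a continuous kernel with diagonal
\[
K_\pi(y,y)\;=\;\what{f_A}\bigl(\psi(y)\bigr)\cdot\int_K f_H\bigl(s(y)\,k\,s(y)^{-1}\bigr)\,dk,
\]
where $s:\R^n\sm\{0\}\to\SL_n(\R)$ is a measurable section of the orbit map and $\psi$ is the cocycle produced by $\chi_\xi$. For generic $f_A$ the factor $\what{f_A}(\psi(y))$ stays bounded away from zero near $y=0$, so the obstruction to trace class comes from the inner $K$-integral.

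The decisive step is the scaling of that integral as $y\to 0$. Taking $y=\lambda e_1$ and $s(y)=\mathrm{diag}(\lambda,\lambda^{-1/(n-1)},\ldots,\lambda^{-1/(n-1)})\in\SL_n(\R)$, conjugation by $s(y)$ fixes the $\SL_{n-1}(\R)$-block of $K$ and rescales the $\R^{n-1}$-block by $\lambda^{n/(n-1)}$; a change of variables in the $\R^{n-1}$-factor yields
\[
\int_K f_H\bigl(s(y)\,k\,s(y)^{-1}\bigr)\,dk\;=\;|\lambda|^{-n}\int_K f_H(k)\,dk.
\]
Hence $K_\pi(y,y)\sim C\,|y|^{-n}$ as $y\to 0$ for generic $f_H$, and $\int_{|y|<1}|y|^{-n}\,dy$ diverges logarithmically. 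Lemma \ref{lem1.8}, applied to the $\ast$-subalgebra $\CA:=\pi(C_c^\infty(G))\subset\CB(L^2(\R^n\sm\{0\}))$, which satisfies $\CA=\CA^2$ by Dixmier--Malliavin, therefore forces some $\pi(f)$ to fail to be trace class, so $G$ is not a trace class group. The main obstacle in making this rigorous is correctly bookkeeping the $A$-cocycle $\psi$ and the Jacobian of the fibration $\SL_n(\R)\to\R^n\sm\{0\}$ when writing down $K_\pi$; once these are in place the $|y|^{-n}$ scaling is forced by the exponents appearing in the diagonal matrix $s(y)$, and the divergence of $\int K_\pi(y,y)\,dy$ is immediate.
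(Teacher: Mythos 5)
Your proposal is correct, and for part (b) it reaches the paper's conclusion by a recognizably different computation applied to what is in fact the same representation: your $\Ind_{K\ltimes\R^n}^{G}(\triv_K\otimes\chi_\xi)$ is unitarily equivalent, via Fourier transform on $\R^n$, to the representation $R(q,y)\phi(x)=\phi(qx+y)$ on $L^2(\R^n)$ that the paper uses, and both arguments share the same skeleton (Dixmier--Malliavin gives $\CA=\CA^2$, Lemma \ref{lem1.8} reduces trace class to convergence of the diagonal kernel integral, and one tests on $f=g\otimes h$). The difference is where the divergence is exhibited: the paper stays in the position-space realization, substitutes $x\mapsto(I-q)^{-1}x$, and finds $\int_Q|\det(I-q)|^{-1}g(q)\,dq=\infty$ because $|\det(I-q)|^{-1}$ fails to be locally integrable across the hypersurface of elements of $\SL_n(\R)$ with eigenvalue $1$; you pass to the Mackey realization on the orbit $\R^n\sm\{0\}$ and locate the divergence in the $|y|^{-n}$ blow-up of $k(y,y)$ at the puncture. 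These are the same divergent integral viewed on the two sides of the Fourier transform. The paper's route is shorter (no section, no little-group bookkeeping, the Jacobian appears in one line), while yours makes the geometric cause visible --- the $Q$-invariant measure on the non-closed orbit is not tempered at $0$ --- which connects directly to the temperedness criterion for $\pi_{\chi,\rho}$ proved later in the paper. One caveat: the entire content of your part (b) is the sign of the exponent, $|y|^{-n}$ versus $|y|^{+n}$, and that sign flips under several of the conventions you leave implicit (whether $K$ is upper or lower block-triangular for the chosen $\xi$, and whether the diagonal of the induced kernel involves $s(y)ks(y)^{-1}$ or $s(y)^{-1}ks(y)$); with consistent choices it does come out as $|y|^{-n}$, but this is precisely the bookkeeping you defer, and it must be nailed down since the opposite sign would yield a convergent integral and no contradiction. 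Minor further differences: for type I the paper simply cites Dixmier's theorem on real algebraic groups rather than running the inductive Mackey argument, and for part (a) both you and the paper essentially defer to Kirillov.
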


\begin{proof}
(a) This is due to Kirillov, \cite{Kiri}, see also \cite{Baggett}.

(b) The group $G=\SL_n(\R)\ltimes\R^n=Q\ltimes N$  is a connected algebraic group over the reals 
and  those have been shown to be type I by Dixmier in \cite{DixAlgLieGrps}.

Assume that $n\ge 2$  and that the group $G=\SL_n(\R)\ltimes\R^n$ is trace class.
Consider the Hilbert space $V=L^2(\R^n)$ and define the unitary representation $R$ of $G$ on $V$ by
$$
R(q,y)\phi(x)=\phi(qx
+y),\qquad q\in\SL_n(\R),\ x,y\in\R^n.
$$ 
By taking Fourier transforms one shows that $R$ is irreducible.
Let $f\in C_c^\infty(G)$.
Then for $\phi\in V$ we have
\begin{align*}
R(f)\phi(x)&=\int_Gf(y)R(y)\phi(x)\,dx\\
&=\int_Q\int_{\R^n}f(q,n)\phi(qx+n)\,dn\,dq\\
&=\int_Q\int_{\R^n}f(q,n-qx)\phi(n)\,dn\,dq.
\end{align*}
Therefore $R(f)$ is an integral operator on $L^2(\R^n)$ with continuous kernel
$$
k_f(x,y)=\int_Qf(q,y-qx)\,dq.
$$
 A computation shows that $k_{f*g}=k_f*k_g$.
By the Theorem of Dixmier-Malliavin the algebra $R(C_c^\infty(G))$ satisfies $\CA=\CA^2$ and so by Lemma \ref{lem1.8} we conclude that
$\tr(R(f))=\int_{\R^n}k_f(x,x)\,dx$ and in particular, the integral always converges.
Let now $f\in C_c^\infty(G)$ be of the form $f(q,n)=g(q)h(n)$ for some $g\in C_c^\infty(Q)$ and $h\in C_c^\infty(N)$ with $g,h\ge 0$ then by positivity we are allowed to interchange the order of integration in
\begin{align*}
\int_{\R^n}k_f(x,x)\,dx
&= \int_{\R^n}\int_Qf(q,x-qx)\,dq\,dx\\
&= \int_Q\int_{\R^n}f(q,(I-q)x)\,dx\,dq\\
&= \int_Q\int_{\R^n}|\det(I-q)|^{-1}f(q,x)\,dx\,dq\\
&= \int_Q|\det(I-q)|^{-1}g(q)\,dq\ \int_{\R^n}h(x)\,dx.
\end{align*}
But the integral $\int_Q|\det(I-q)|^{-1}g(q)\,dq$ will for $0\le g\in C_c^\infty(Q)$ not generally be finite.
So the assumption is false and so $G$ is not trace class.
\end{proof}

\begin{proposition}\label{prop1.9}
\begin{enumerate}[\quad\rm (a)]
\item If $G$ and $H$ are trace class groups, then so is their direct product $G\times H$.

\item If $F$ is an open subgroup of $G$ of finite index, then
\begin{center}
$G$ is trace class $\quad\Leftrightarrow\quad$ $F$ is trace class.
\end{center} 
\end{enumerate}
\end{proposition}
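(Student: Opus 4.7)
My plan for part~(a) is to use Proposition~\ref{prop1.6} to reduce to showing the Hilbert--Schmidt property. By Theorem~\ref{thm1.7} the groups $G$ and $H$, and hence $G\times H$, are type~I, so every $\pi\in\what{G\times H}$ is an exterior tensor product $\pi_1\hat\otimes\pi_2$ with $\pi_i\in\what{G_i}$. I then need to verify that $\pi(f)$ is Hilbert--Schmidt for every $f\in C_c^\infty(G\times H)$. For a pure tensor $f=\phi\otimes\psi$ this is immediate since $\pi(f)=\pi_1(\phi)\otimes\pi_2(\psi)$ has Hilbert--Schmidt norm $\norm{\pi_1(\phi)}_{HS}\norm{\pi_2(\psi)}_{HS}$, finite by Proposition~\ref{lem1.3}(b). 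For a general $f$ I would pass to a local Lie quotient $L_1\times L_2$ and a compact subset $K_1\times K_2$, where nuclearity of the Fr\'echet spaces $C_{K_i}^\infty(L_i)$ yields an absolutely convergent expansion $f=\sum_k\la_k\phi_k\otimes\psi_k$ with $\sum|\la_k|<\infty$ and $(\phi_k),(\psi_k)$ bounded. Because the Hilbert--Schmidt seminorm is continuous (Proposition~\ref{lem1.3}(b)), it is bounded on these bounded sets, so the partial sums $\sum\la_k\,\pi_1(\phi_k)\otimes\pi_2(\psi_k)$ converge to $\pi(f)$ in Hilbert--Schmidt norm.

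For part~(b) I treat the two implications separately. For $G$ trace class $\Rightarrow$ $F$ trace class: fix $\sigma\in\what F$ and form the induced representation $\pi=\Ind_F^G\sigma$. Since $[G:F]<\infty$ and $G$ is type~I (Theorem~\ref{thm1.7}), $\pi$ decomposes into a finite direct sum of irreducibles of $G$, and Frobenius reciprocity places $\sigma$ as an $F$-invariant subspace of some $\tilde\pi|_F$ with $\tilde\pi\in\what G$. Because $F$ is open, every $h\in C_c^\infty(F)$ extends by zero to $\tilde h\in C_c^\infty(G)$, and $\tilde\pi(\tilde h)=\tilde\pi|_F(h)$ is trace class by hypothesis; compressing to $V_\sigma$ yields that $\sigma(h)$ is trace class.

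For the converse $F$ trace class $\Rightarrow$ $G$ trace class, I first reduce to $F$ being normal by replacing it with the core $N=\bigcap_{g\in G}gFg^{-1}$, which is an open normal subgroup of finite index in $G$ and in $F$; the direction already proven gives that $N$ inherits the trace class property, so we may assume $F\lhd G$. Given $\pi\in\what G$ and $f\in C_c^\infty(G)$, decompose $f=\sum_{j=1}^n f_j$ with $f_j$ supported in the coset $g_jF$. Then $\pi(f_j)=\pi(g_j)\,\pi|_F(h_j)$ with $h_j(x)=f_j(g_jx)\in C_c^\infty(F)$. Clifford theory for the finite-index normal subgroup $F$ (using that $F$ is type~I) forces $\pi|_F$ to split as a finite direct sum of irreducibles of $F$, so $\pi|_F(h_j)$ is a finite sum of trace class operators and hence $\pi(f)$ is trace class.

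The main technical hurdle in (a) is handling non-product $f$; this is what forces the appeal to nuclearity of the test-function Fr\'echet spaces coupled with the continuity of the Hilbert--Schmidt seminorm from Proposition~\ref{lem1.3}(b). In (b) the delicate point is the converse, where one must reduce to the normal case via the core and then rely on Clifford theory to guarantee that the decomposition of $\pi|_F$ is a \emph{finite} direct sum — without finiteness a sum of trace class operators need not remain trace class.
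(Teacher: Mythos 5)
Your part (a) is correct but takes a genuinely different route from the paper. The paper avoids decomposing the test function altogether: it replaces $g$ by $f=g^**g$ so that every diagonal matrix coefficient of $\tau(f)$ is nonnegative, shows that $h(y)=\tr\pi(f(\cdot,y))$ lies in $C_c^\infty(H)$ (by differentiating under the integral after passing to a Lie quotient), and then identifies $\tr\eta(h)$ with the double sum $\sum_{i,j}\sp{\tau(f)v_i\otimes w_j,v_i\otimes w_j}$, whose finiteness is exactly the Hilbert--Schmidt property of $\tau(g)$. You instead expand $f$ through the isomorphism $C_{K_1\times K_2}^\infty(L_1\times L_2)\cong C_{K_1}^\infty(L_1)\hat\otimes_\pi C_{K_2}^\infty(L_2)$ and the Grothendieck representation of elements of a completed projective tensor product, and then sum Hilbert--Schmidt norms via Proposition \ref{lem1.3}(b). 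That works, and it trades the paper's positivity/Fubini trick for the (standard but heavier) nuclearity of the test-function spaces; both arguments use Proposition \ref{prop1.6} and the type I tensor decomposition identically.

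In part (b) your architecture matches the paper's, but two steps are asserted exactly where the difficulty lies. In the forward direction you claim $\Ind_F^G\sigma$ is a \emph{finite} direct sum of irreducibles because $[G:F]<\infty$ and $G$ is type I. Type I only yields a direct integral decomposition; finiteness of the length of $\Ind_F^G\sigma$ for non-normal $F$ requires its commutant to be finite dimensional, which is not automatic. The paper sidesteps this by first replacing $F$ with its core $N=\ker\(G\to\Per(G/F)\)$ for \emph{both} implications; for normal $N$ the restriction of $\Ind_N^G\tau$ to $N$ is visibly the finite sum $\bigoplus_{g\in G/N}\tau^g$, hence of finite length, and the induced representation inherits finite length. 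You should make that reduction before, not only after, the forward direction.

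In the converse you invoke ``Clifford theory'' to conclude that $\pi|_F$ is a finite direct sum of irreducibles. For locally compact groups this is precisely the nontrivial point. Finiteness of $G/F$ does bound the number of isotypic components, but nothing you cite excludes \emph{infinite multiplicity}, i.e.\ $\pi|_F\cong\infty\cdot\sigma\oplus\cdots$, in which case $\pi|_F(h)$ would be an infinite sum of trace class operators and the argument collapses — the very danger you flag at the end. The paper closes this gap with Proposition 2.1 of H{\o}egh-Krohn--Landstad--St{\o}rmer: the finite group $G/F$ acts on the commutant $\CA=\pi(F)^\circ$ with $\CA^{G/F}=\C\Id$ by Schur's lemma, and an ergodic action of a finite group forces $\CA$ to be finite dimensional, whence $\pi|_F$ has finite length. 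Some input of this kind is indispensable here, and your proposal does not supply it.
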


\begin{proof}
A proof of this proposition can be drawn from the results of \cite{Mili}. For the convenience of the reader we give an independent proof.

(a) As $G$ and $H$ are type I groups, any irreducible unitary representation $\tau$ of $G\times H$ is a Hilbert space tensor product $\tau=\pi\otimes \eta$ of representations $\pi\in\what G$ and $\eta\in \what H$. 
Let $g\in C_c^\infty(G\times H)$.
It suffices to show that $\tau(g)$ is Hilbert-Schmidt, i.e., $\infty>\tr(\tau(g)^*\tau(g))=\tr(\tau(g^**g))$.
So let $f=g^**g$, where $g^*(x)=\Delta(x^{-1})\ol{g(x^{-1})}$.
Fix orthonormal bases $(v_i)_{i\in I}$ of the representation space $V_\pi$ and $(w_j)_{j\in J}$ of $V_\eta$.
We have to show that $\sum_{i,j}\sp{\tau(f)v_i\otimes w_j,v_i\otimes w_j}<\infty$, where we note that each summand in this sum is $\ge 0$.
For every fixed $y\in H$, the function $f(\cdot,y)$ is in $C_c^\infty(G)$, therefore $\pi(f(\cdot,y))$ is trace class.
As the trace is continuous, the map $y\mapsto \tr\pi(f(\cdot,y))$ is continuous on $H$.
As $f$ factors over a Lie quotient of $G\times H$, we may assume that $G$ and $H$ are Lie groups for the moment.
For $X\in\Lie(H)$, the interchange of differentiation and integration shows that $\pi(Xf(\cdot,y))=X\pi(f(\cdot,y))$ and the same after applying the trace, so that after iteration we infer that the function $h:y\mapsto \tr\pi(f(\cdot,y))$ lies in $C_c^\infty(H)$.
Now we may abandon the condition that $G$ and $H$ be Lie groups again while still $h\in C_c^\infty(H)$.
Therefore the following trace exists in $\C$:
\begin{align*}
\tr\eta(h)&=\tr\(\int_H\tr\pi(f(\cdot,y))\eta(y)\,dy\)\\
&=\tr\(\int_H\pi(f(\cdot,y))\otimes\eta(y)\,dy\)\\
&=\sum_i\sum_j\sp{\(\int_H\pi(f(\cdot,y))\eta(y)\,dy\) v_i\otimes w_j,v_i\otimes w_j}\\
&=\sum_i\sum_j\sp{\tau(f) v_i\otimes w_j,v_i\otimes w_j}.
\end{align*}
Therefore the last double sum is indeed finite.

(b) 
Let $F$ be an open subgroup of $G$ of finite index.
Then $F$ contains an open subgroup $N$ which is normal in $G$ and has finite index, simply take $N$ to be the kernel of the homomorphism $G\to\mathrm{Per}(G/F)$.
Replacing the pair $(G,F)$ with $(G,N)$ and $(F,N)$  we see that it suffices to assume that $F=N$ is a normal subgroup of finite index.

So we first assume that $G$ is trace class and show that $N$ is.
Let $(\tau,V_\tau)\in\what N$ and let $\pi=\Ind_N^G(\tau)$ be the representation on the space $V_\pi$ of all maps $f:G\to V_\tau$ such that $f(ng)=\tau(n)f(g)$ holds for all $n\in N$ and $g\in G$.
Then the restriction of $\pi$ to $N$ is of finite length, hence, as a $G$-representation, $\pi$ is of finite length and as $G$ is trace class, so is $\pi$ and so is $\pi|_N$ as $N$ has finite index and is open in $G$.

The argument for the converse direction is due to the referee (we had a more complicated proof):
Assume that $N$ is trace class and let $\pi\in\what G$.
Let $\CA=\pi(N)^\circ$ be the commutant of $\pi(N)$.
This von Neumann algebra is acted upon by the finite group $F=G/N$ by conjugation. By the Lemma of Schur we know $\CA^F=\C\Id$.
Since $F$ is finite, by Proposition 2.1 of \cite{HKLS} it follows that $\CA$ is finite-dimensional.
Therefore $\pi|_N$ is of finite length and so, as $N$ is trace class, $\pi_N(f)$ is trace class for every $f\in C_c^\infty(N)$. Since $C_c^\infty(G)=\bigoplus_{g\in G/N}L_gC_c^\infty(N)$ it follows that $G$ is trace class.
\end{proof}

\begin{theorem}\label{thm2.1}
Suppose that $G$ is a unimodular group of type I and let $\mu$ be the Plancherel measure on $\what G$.
Let $f\in C_c^\infty(G)$, then $\pi(f)$ is of trace class for $\mu$-almost all $\pi\in\what G$.
\end{theorem}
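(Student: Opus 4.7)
The plan is to combine the Plancherel theorem for unimodular type I groups with the Dixmier--Malliavin factorization already used in the proof of Proposition \ref{prop1.6}. Recall that for a unimodular type I group, the Plancherel theorem of Dixmier provides an isometry between $L^2(G)$ and the direct integral $\int_{\what G}^\oplus \mathrm{HS}(V_\pi)\,d\mu(\pi)$ of Hilbert--Schmidt spaces, so that for any $g\in L^1(G)\cap L^2(G)$ one has
$$
\norm g_2^2 \;=\; \int_{\what G}\norm{\pi(g)}_{HS}^2\,d\mu(\pi).
$$
In particular, since every $g\in C_c^\infty(G)$ is compactly supported and thus lies in $L^1(G)\cap L^2(G)$, the function $\pi\mapsto\norm{\pi(g)}_{HS}$ is finite $\mu$-almost everywhere, i.e.\ $\pi(g)$ is Hilbert--Schmidt for $\mu$-a.e.\ $\pi$.

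First I would invoke the theorem of Dixmier--Malliavin (as cited in the proof of Proposition \ref{prop1.6}) in the form $C_c^\infty(G) = C_c^\infty(G) * C_c^\infty(G)$ to write the given $f\in C_c^\infty(G)$ as a finite sum
$$
f \;=\; \sum_{j=1}^n g_j * h_j
$$
with $g_j, h_j\in C_c^\infty(G)$. Consequently
$$
\pi(f) \;=\; \sum_{j=1}^n \pi(g_j)\,\pi(h_j).
$$
By the preceding paragraph, for each $j$ there is a $\mu$-null set $E_j\subset\what G$ outside of which both $\pi(g_j)$ and $\pi(h_j)$ are Hilbert--Schmidt. Setting $E=\bigcup_j E_j$, which is still $\mu$-null, we conclude that for every $\pi\in\what G\sm E$ each product $\pi(g_j)\pi(h_j)$ is trace class, since the product of two Hilbert--Schmidt operators is trace class. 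Hence $\pi(f)$ is a finite sum of trace class operators, and therefore trace class, for $\mu$-almost every $\pi\in\what G$.

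The only delicate point is making sure the Dixmier--Malliavin factorization is available in the stated generality (unimodular locally compact of type I), but this was already handled in Proposition \ref{prop1.6}: the result on Lie groups extends to arbitrary locally compact groups by the projective-limit definition of $C_c^\infty(G)$ adopted in Section 1. Apart from this, everything is a direct consequence of the Plancherel isometry and the ideal structure of the Schatten classes, so no further obstacle arises.
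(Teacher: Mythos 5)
Your proposal is correct and follows exactly the paper's own argument: factor $f$ via Dixmier--Malliavin as a finite sum of convolutions, note each factor lies in $L^1(G)\cap L^2(G)$ so the Plancherel theorem makes $\pi(g_j)$ and $\pi(h_j)$ Hilbert--Schmidt off a $\mu$-null set, and conclude that $\pi(f)$ is trace class almost everywhere. The paper states this in two sentences; you have merely filled in the same steps explicitly.
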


\begin{proof}
Because of $C_c^\infty(G)=C_c^\infty(G)*C_c^\infty(G)$, it suffices to show that $\pi(f)$ is Hilbert-Schmidt $\mu$-almost everywhere.
This, however, is already true if $f\in L^1(G)\cap L^2(G)$ by the Plancherel Theorem.
\end{proof}

\section{Reductive and totally disconnected Groups}

By a \e{reductive Lie group} we mean a Lie group whose Lie algebra is reductive.

\begin{theorem}
A connected reductive Lie group is trace class.
\end{theorem}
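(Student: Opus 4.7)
The plan is to reduce to Harish-Chandra's theorem for semisimple Lie groups (quoted in the introduction) and the abelian case, exploiting the structure theorem for reductive Lie algebras together with Proposition~\ref{prop1.9}(a). Writing $\g=\mathfrak z\oplus[\g,\g]$ with $\mathfrak z=Z(\g)$ and $[\g,\g]$ semisimple, I would pass to the corresponding analytic subgroups: the connected center $Z^0=\exp\mathfrak z$, which is an abelian Lie group isomorphic to $\R^a\times\T^b$, and the derived subgroup $G_1=[G,G]$, which is a closed connected semisimple Lie subgroup commuting pointwise with $Z^0$. These two subgroups generate $G$, and the multiplication map
$$
m:Z^0\times G_1\longrightarrow G
$$
is a surjective homomorphism whose kernel is the anti-diagonal copy of the discrete central subgroup $F=Z^0\cap G_1\subset Z(G_1)$.

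Next I would verify that the source $Z^0\times G_1$ is trace class. The factor $Z^0$ is trace class by Examples~\ref{Ex1}. For $G_1$, the finite-center case is covered directly by Harish-Chandra's theorem \cite{Hari}; when $Z(G_1)$ is infinite (as happens, e.g., if $G_1$ contains a factor isomorphic to $\widetilde\SL_2(\R)$), one reduces to the finite-center case by decomposing an irreducible $\pi\in\what{G_1}$ according to its central character $\chi$ on $Z(G_1)$ and realising $\pi$ as an irreducible projective representation $\pi^\sharp$ of the adjoint quotient $G_1/Z(G_1)$; the latter has trivial center and so lies in Harish-Chandra's class. For $f\in C_c^\infty(G_1)$ one identifies $\pi(f)$ with $\pi^\sharp(f^\chi)$, where $f^\chi(x)=\sum_{z\in Z(G_1)}\chi(z)f(zx)$ is a compactly supported function on $G_1/Z(G_1)$, and transfers the trace-class property back to $\pi(f)$. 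Proposition~\ref{prop1.9}(a) then yields that $Z^0\times G_1$ is trace class.

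Finally I would descend from $Z^0\times G_1$ to $G$ along the map $m$. When $F$ is finite, $m$ is proper, so $\widetilde f:=f\circ m$ lies in $C_c^\infty(Z^0\times G_1)$ for every $f\in C_c^\infty(G)$; every $\pi\in\what G$ lifts to $\widetilde\pi\in\what{Z^0\times G_1}$ trivial on $\ker m$, and a short computation identifies $\widetilde\pi(\widetilde f)$ with $|F|\,\pi(f)$, so $\pi(f)$ inherits the trace-class property. The main technical obstacle I expect is the case of infinite discrete $F$, where $f\circ m$ is no longer compactly supported: here the same sort of central-character averaging used above for $G_1$ must be applied once more, now to $F\subset Z^0\times G_1$, yielding a compactly supported twisted test function on a fundamental domain and recovering the trace-class identification for $\pi(f)$. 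It is this repeated central-character argument, required both for the semisimple factor with infinite center and for the descent with infinite $F$, that I anticipate to be the most delicate step of the proof.
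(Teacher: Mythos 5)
Your route is genuinely different from the paper's: you split $G$ structurally as a quotient of $Z^0\times G_1$ and try to quote Harish-Chandra for the semisimple factor, whereas the paper never separates off the derived group at all. It twists by the central character, replacing $G$ by $H=(G\times\T)/Z$, which has compact center, and then proves the trace class property directly from admissibility of the $K$-types (the bound $\dim V_\pi(\tau)\le(\dim\tau)^2$) together with the Casimir operator of $K$: one writes $\pi(f)=(\pi(\Om_K)-1)^{-N}\pi((\Om_K-1)^Nf)$ with the first factor trace class for $N$ large and the second factor bounded. Your proposal has a genuine gap at exactly the point you flag as delicate, namely the semisimple factor $G_1$ with infinite center. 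Harish-Chandra's theorem as cited in \cite{Hari} concerns ordinary representations of semisimple groups with \emph{finite} center. Your $\pi^\sharp$ is a \emph{projective} representation of the adjoint group $G_1/Z(G_1)$; the statement ``the latter lies in Harish-Chandra's class'' is about the group, not about the representation, and does not let you apply the theorem to $\pi^\sharp$. To make sense of $\pi^\sharp(f^\chi)$ as a trace class operator you must pass to the genuine representation of the associated central extension by $\T$, whose center is compact but infinite, hence again outside the class covered by the quoted theorem; moreover $f^\chi$ is not a function on $G_1/Z(G_1)$ when $\chi$ is nontrivial but a $\chi^{-1}$-equivariant section, so even the definition of the operator requires that extension. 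Closing this gap is precisely the content of the paper's proof: reduce to a real reductive group with compact center in the sense of \cite{Wallach} and establish trace class there by the admissibility-plus-Casimir argument (or, equivalently, invoke an extension of Harish-Chandra's theorem to that class). The bare citation of \cite{Hari} does not suffice.

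The remaining steps of your plan are sound, and the descent is in fact easier than you anticipate: for the surjection $m:Z^0\times G_1\to G$ with discrete kernel $\Gamma$ no central-character averaging is needed. Given $f\in C_c^\infty(G)$, choose $\tilde f\in C_c^\infty(Z^0\times G_1)$ with $\sum_{\gamma\in\Gamma}\tilde f(\gamma x)=f(m(x))$ (such $\tilde f$ exists because fiber summation maps $C_c^\infty$ of the group onto $C_c^\infty$ of the quotient); then $(\pi\circ m)(\tilde f)=\pi(f)$ and irreducibility of $\pi\circ m$ finishes the argument, whether $\Gamma$ is finite or not. But note that once you have the central-twist device in hand to handle $G_1$ with infinite center, you may as well apply it to $G$ itself and dispense with the decomposition, the product proposition, and the descent altogether --- which is exactly what the paper does.
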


\begin{proof}
Let $G$  be a connected reductive Lie group, let $G_\der$ be its derived group, $Z_\der$ the center of $G_\der$ and let $\ol G=G_\der/Z_\der$. 
Let $\ol K$ be a maximal compact subgroup of $\ol G$ and let $K$ be its pre-image in $G_\der$.
Now let $f\in C_c^\infty(G)$ and $(\pi,V_\pi)$ an irreducible unitary representation of $G$.
Then the center $Z$ of $G$ acts by a character $\chi_\pi:Z\to\T$.
Let $H=(G\times \T)/Z$, where $Z$ is embedded into the product by $z\mapsto (z,\chi_\pi(z)^{-1})$.
Then $\pi$ induces a representation $\pi_H$ of the group $H$ via $\pi_H(g,t)=t\pi(g)$.
The function $f\in C_c^\infty(G)$ induces a function $f_H\in C_c^\infty(H)$ by 
$$
f_H(g,t)=t^{-1}\int_Zf(gz)\chi_\pi(z)\,dz.
$$
We then have 
\begin{align*}
\pi_H(f_H)&=\int_Hf_H(x)\pi_H(x)\,dx\\
&=\int_{(G\times \T)/Z}t^{-1}\int_Zf(yz)\chi_\pi(z)\,dz\,t\pi(y)\,d[y,t]\\
&=\int_{G\times\T}f(y)\,\pi(y)\,dy\,dt=\pi(f).
\end{align*}
So it suffices to show that $\pi_H(f_H)$ is trace class, which means that, replacing $G$ with $H$, we have reduced to the case of the center of $G$ being compact.
In particular, then $K$ is compact and $G$ is a real reductive group in the sense of \cite{Wallach}.
We then consider the isotypical decomposition of $\pi|_K$, which we denote as $V_\pi=\bigoplus_{\tau\in\what K}V_\pi(\tau)$.
Then for each $\tau\in\what K$ one has $\dim V_\pi(\tau)\le (\dim\tau)^2$ and the Casimir operator $\Om_K$ of $K$ acts on $V_\pi(\tau)$ via the scalar $\tau(\Om_K)=-\norm{\la_\tau}^2-2\sp{\la_\tau,\rho}$, where $\la_\tau$ is the highest weight of $\tau$ and the norm derives from the negative Killing form $\sp{.,.}$, this formula can be derived from the Harish-Chandra isomorphism, see \cite{Knapp}*{Chap. VIII, Par. 5}.
The highest weight $\la_\tau$ is an element of the finitely generated abelian group of weights and by Weyl's character formula there exists a polynomial function $P$ such that $\dim\tau=P(\la_\tau)$, so in particular, it follows that there exists an integer $N$ such that $(\pi(\Om_K)-1)^{-N}$ is a trace class operator.
Now, as $f\in C_c^\infty(G)$, we get $(\Om_K-1)^Nf\in C_c^\infty(G)$ and so $\pi((\Om_K-1)^Nf)$ is a bounded operator.
Hence $\pi(f)=(\pi(\Om_K)-1)^{-N}\pi((\Om_K-1)^Nf)$ is a trace class operator.
\end{proof}

\begin{definition}
Let $G$ be a totally disconnected group.
A representation $(\pi,V_\pi)$ is called \e{admissible}, if $\dim V_\pi^K<\infty$ holds for every compact open subgroup $K$.

The group $G$ is called \e{admissible}, if every $(\pi,V_\pi)\in\what G$ is admissible.
\end{definition}

\begin{theorem}\label{thm4.3}
Let $G$ be a totally disconnected locally compact group and let $\pi\in\what G$.
Then
\begin{center}
$\pi$ is trace class \qquad $\Leftrightarrow$ \qquad $\pi$ is admissible.
\end{center}
In particular, a reductive linear algebraic group over a local field is trace class.\\
Further, the adelic points of a reductive linear algebraic group over a global field form a trace class group.

Also, let $\CN$ denote a unipotent algebraic group over a non-archimedean field $F$ of characteristic zero, then the group $N=\CN(F)$ is a trace class group.
\end{theorem}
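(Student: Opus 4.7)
The theorem has four parts, which I would prove in order.

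First, the main equivalence $\pi$ trace class $\Leftrightarrow$ $\pi$ admissible. If $\pi$ is trace class, then for any compact open subgroup $K \subset G$ the normalized characteristic function $e_K := \vol(K)^{-1}\1_K$ is a test function, so $\pi(e_K)$ is trace class. But $\pi(e_K)$ is the orthogonal projection onto $V_\pi^K$, and a trace class projection must have finite rank, forcing $\dim V_\pi^K < \infty$. For the converse, any $f \in C_c^\infty(G)$ is bi-invariant under some compact open $K$, so $f = e_K * f * e_K$ and $\pi(f) = \pi(e_K)\pi(f)\pi(e_K)$ factors through the finite-dimensional space $V_\pi^K$; hence $\pi(f)$ has finite rank and is trace class.

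The reductive local-field assertion is then an immediate application of this equivalence together with Jacquet's theorem that every irreducible smooth representation of a reductive group over a local non-archimedean field is admissible; for a totally disconnected group the $K$-invariants of a unitary representation coincide with those of its smooth subspace, so admissibility transfers.

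For the adelic case, decompose $\mathbf{G}(\mathbb{A}) = \mathbf{G}(F_\infty) \times \mathbf{G}(\mathbb{A}_f)$. The archimedean factor is a finite product of real reductive Lie groups, trace class by the earlier theorem combined with Proposition \ref{prop1.9}(a). For the totally disconnected factor $\mathbf{G}(\mathbb{A}_f)$ I would use the restricted tensor product factorization $\pi \cong {\bigotimes}'_v \pi_v$ of any irreducible unitary representation. For a compact open $K = \prod_v K_v$ with $K_v$ hyperspecial at almost all places, commutativity of the spherical Hecke algebra forces $\dim V_{\pi_v}^{K_v} = 1$ at those places, while local admissibility gives $\dim V_{\pi_v}^{K_v} < \infty$ at the finitely many remaining places; hence $V^K$ is finite-dimensional.

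The unipotent case is the main obstacle. I would invoke the Kirillov orbit method for $p$-adic unipotent groups in characteristic zero, available because the Baker--Campbell--Hausdorff series then converges: every $\pi \in \what N$ is isomorphic to $\Ind_M^N \chi$ for some closed polarizing subgroup $M \subset N$ and a unitary character $\chi$ of $M$. For a compact open $K \subset N$, Mackey's restriction theorem expresses the $K$-fixed subspace as
$$
V_\pi^K \;\cong\; \bigoplus_{x \in M \backslash N / K} \C_x,
$$
where $\C_x$ is one-dimensional if the conjugate character $\chi^x$ is trivial on $M \cap xKx^{-1}$, and zero otherwise. The crux is showing that only finitely many double cosets contribute: this uses non-archimedean Pontryagin duality, which forces admissible representatives $x$ to lie in a bounded subset of $N/M \cong F^d$, and any bounded subset there meets only finitely many $K$-orbits. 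The abelian base case is immediate, since each $\pi \in \what N$ is then a character and $V_\pi^K$ is at most one-dimensional.
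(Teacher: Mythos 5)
Your proof of the equivalence ``trace class $\Leftrightarrow$ admissible'' is the same as the paper's: $\1_K$ (normalized or not) gives a trace class projection onto $V_\pi^K$, hence finite rank; conversely a bi-$K$-invariant $f$ gives $\pi(f)$ factoring through $V_\pi^K$. For the reductive local case the paper likewise just invokes the admissibility theorem, attributing it to Harish-Chandra's conjecture and Bernstein's proof rather than to Jacquet; your observation that $K$-invariants of the unitary representation agree with those of its smooth part is a correct point worth making. For the adelic assertion the paper's proof in fact says nothing at all, so your restricted-tensor-product argument (archimedean factor by the reductive Lie group theorem plus Proposition \ref{prop1.9}(a), spherical Hecke algebra giving $\dim V_{\pi_v}^{K_v}\le 1$ at almost all places) supplies more than the paper does and is the standard route.

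The genuine gap is in the unipotent case, and it sits exactly at the step you yourself call the crux. The Mackey computation of $V_\pi^K$ as a sum over double cosets $MxK$ is fine; the problem is the claim that ``non-archimedean Pontryagin duality forces admissible representatives $x$ to lie in a bounded subset of $N/M$.'' Duality in the form ``$\psi(\lambda y)=1$ for all $y$ in a lattice if and only if $\lambda$ lies in the compact dual lattice'' applies when the lattice is fixed and only the character varies. Here the subgroup $K\cap x^{-1}Mx$ on which $\chi^x$ must be trivial itself moves with $x$, and when the polarizing subgroup $M$ is not normal this intersection genuinely shrinks as $x$ leaves compact sets (already in the Heisenberg group, conjugating a non-normal line by a far-away element cuts its intersection with $K$ down to a deep open subgroup). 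The triviality condition therefore becomes \emph{weaker} at infinity, and boundedness of the contributing set is not a formal consequence of duality; when $M$ happens to be normal the intersection is constant and your argument does go through, which is why it looks immediate in the simplest examples. Establishing the finiteness in general is precisely the content of van Dijk's admissibility theorem, which the paper cites instead of proving and which proceeds by induction on $\dim N$ through the coadjoint orbit structure; it is also where the characteristic-zero hypothesis is used, and the paper's closing remarks note that the positive-characteristic case is open. You should either carry out that induction or, as the paper does, cite the result.
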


\begin{proof}
For the first statement, $\pi$ be trace class and let $K\subset G$ be a compact open subgroup. Then $f=\1_K$ is in $C_c^\infty(G)$ and the operator
$\pi(f)$ equals the projection onto the $K$-invariants $V_\pi^K$.
As $\pi(f)$ is trace class, the latter space must be finite-dimensional. 

For the converse direction let $\pi$ be admissible and let $f\in C_c^\infty(G)$.
Then there exists a compact open subgroup $K\subset G$ such that $f$ factors over $K\bs G/K$, and so the operator $\pi(f)$ maps $V_\pi$ to a subspace of $V_\pi^K$.
The latter space is finite-dimensional, so $\pi(f)$ is of finite rank, hence trace class.

Harish-Chandra conjectured in \cite{Hari2}, and Bernstein proved in \cite{Bern}, that a reductive linear algebraic group over a local field is admissible, whence the second statement.
The last statement is shown in \cite{Dijk}, see the Remark at the end of Section 5.
\end{proof}

\begin{example}
Let $F$ be a non-archimedean local field and let $n\in\N$. We claim that the locally compact group $G=\GL_n(F)\ltimes F^n$ is not trace class.
For this equip $F^n$ with the additive Haar measure and consider the unitary representation $R$ on the space $V=L^2(F)$ given by
$$
R(g,v)\phi(x)=\sqrt{|\det g|^n}\phi(gx+v).
$$
As in Proposition \ref{prop1.7} one applies Fourier transform to see that this representation is irreducible.
Let $\CO$ be the ring of integers in $F$ and let $\pi\in\CO$ be a uniformizing element, i.e., a generator of the unique maximal ideal of $\CO$.
Then $K=\GL(\CO)\ltimes\CO^n$ is a compact open subgroup of $G$ and the space $V^K$ of $K$-invariants consists of all functions $\phi\in L^2(F)$ which are constant on the $K$-orbits in $F^n$.
Each $K$-orbit is compact and open in $F^n$, so, as $F^n$ is not compact itself, there are infinitely many of them.
Hence the space $V^K$ is infinite-dimensional, so the group $G$ is not admissible and hence not trace class.
\end{example}

\section{Discrete groups}

\begin{definition}
A locally compact group $G$ is \e{of finite representation type}, if every irreducible unitary representation of $G$ is finite-dimensional.
\end{definition}

\begin{examples}
\item Abelian groups and compact groups are of finite representation type.
\item If a locally compact group $G$ has a finite index closed abelian subgroup, then $G$ is of finite representation type.
\item Let $G$ be a locally compact group and let $Z$ be its center. If $G/Z$ is compact, then $G$ is of finite representation type.
\begin{proof}
Let $(\pi,V_\pi)$ be an irreducible unitary representation of $G$.
By the Lemma of Schur, the group $Z$ acts on $V_\pi$ through a character $\chi_\pi:Z\to\T$, where $\T=\{t\in\C:|t|=1\}$ is the circle group.
Embed $Z$ into the group $G\times\T$ via $z\mapsto (z,\chi_\pi(z)^{-1})$ and let $H=(G\times \T)/Z$.
Then $\pi$ induces an irreducible representation $\pi_H$ of the group $H$ via $\pi_H(g,t)=t\pi(g)$.
We finally claim that the group $H$ is compact, which then implies that $V_\pi$ is finite-dimensional.
As $K=G/Z$ is compact, there exists a compact set $C\subset G$ such that $G=CZ$.
Therefore, $H$ is the image of the compact set $C\times\T$ under the projection $G\times\T\to H$.
Hence $H$ is compact.
\end{proof}
\end{examples}

\begin{theorem}
For a discrete group $\Ga$ the following are equivalent:
\begin{enumerate}[\rm (a)]
\item $\Ga$ is trace class,
\item $\Ga$ is of finite representation type,
\item $\Ga$ is type I,
\item $\Ga$ is \e{abelian by finite}, i.e., there is an exact sequence
$$
1\to A\to\Ga\to F\to 1,
$$
where $A$ is abelian and $F$ is finite.
\end{enumerate}
\end{theorem}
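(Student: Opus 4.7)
The plan is to prove the cycle (a)$\Rightarrow$(c)$\Rightarrow$(d)$\Rightarrow$(b)$\Rightarrow$(a), together with the direct equivalence (a)$\Leftrightarrow$(b). The equivalence (a)$\Leftrightarrow$(b) is really the defining feature of the discrete case and essentially built into Examples~\ref{Ex1}; the truly nontrivial ingredient is (c)$\Rightarrow$(d), which is Thoma's classification theorem.

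First I would dispatch (a)$\Leftrightarrow$(b). Since $\Ga$ is discrete, $\1_{\{1\}}\in C_c^\infty(\Ga)$ and $\pi(\1_{\{1\}})=\Id_{V_\pi}$ for every unitary representation $\pi$; this operator is trace class precisely when $\dim V_\pi<\infty$, which is the observation already made in Examples~\ref{Ex1}. Conversely, every finite-dimensional representation is tautologically trace class, since each $\pi(f)$ is then a bounded operator on a finite-dimensional space. Hence (a) and (b) say the same thing applied to all $\pi\in\what\Ga$.

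Next, (a)$\Rightarrow$(c) is the content of Theorem~\ref{thm1.7} specialised to discrete $\Ga$. For (d)$\Rightarrow$(b), observe that every subgroup of a discrete group is closed, so if $\Ga$ contains a finite-index abelian subgroup $A$ then by Frobenius reciprocity each $\pi\in\what\Ga$ embeds in an induced representation $\Ind_A^\Ga\chi$ for a character $\chi$ of $A$, and this induced representation has dimension $[\Ga:A]<\infty$. This is essentially the second example of finite-representation-type groups recorded at the beginning of the current section.

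The main obstacle is (c)$\Rightarrow$(d), and here the plan is to invoke the classical theorem of Thoma: a countable discrete group is of type I if and only if it is abelian by finite. I would cite this result rather than reprove it, since its proof is lengthy and orthogonal to the rest of the survey. The underlying idea of Thoma's argument is that if $\Ga$ fails to be abelian by finite, one can exhibit a quotient whose group von Neumann algebra is not of type I (a II$_1$ factor in the ICC case), contradicting type I for $\Ga$ itself. The one subtlety I would flag is that Thoma's theorem is usually stated under a countability assumption; in the uncountable case one either restricts to separable irreducible representations throughout or reduces by a standard compactness argument to a countable type~I subquotient of $\Ga$.
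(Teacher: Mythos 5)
Your overall architecture matches the paper's: the hard implication is delegated to Thoma's theorem, the equivalence (a)$\Leftrightarrow$(b) is the $\1_{\{1\}}$ observation from Examples~\ref{Ex1}, and the remaining content is the Clifford-theoretic step (d)$\Rightarrow$(b). One harmless difference: the paper closes the loop with a direct proof of (b)$\Rightarrow$(c) --- a finite-dimensional irreducible representation generates a finite-dimensional factor von Neumann algebra, which is automatically of type I --- rather than your route (a)$\Rightarrow$(c) through Theorem~\ref{thm1.7}. Both are legitimate; the paper's choice keeps the discrete case self-contained, while yours reuses a theorem already proved. Also note the paper quotes Thoma for the full equivalence (c)$\Leftrightarrow$(d), whereas your cycle only needs (c)$\Rightarrow$(d); that is fine.

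The one genuine gap is in your (d)$\Rightarrow$(b). Frobenius reciprocity for the finite-index inclusion $A\le\Ga$ gives $\Hom_\Ga(\pi,\Ind_A^\Ga\chi)\cong\Hom_A(\pi|_A,\chi)$, so it produces an embedding of $\pi$ into the finite-dimensional representation $\Ind_A^\Ga\chi$ only once you know that some character $\chi$ of $A$ occurs as a genuine (discrete) subrepresentation of $\pi|_A$. A priori $\pi|_A$ is only a direct integral over $\what A$, possibly with no atoms, and then $\Hom_A(\pi|_A,\chi)=0$ for every $\chi$ and Frobenius reciprocity yields nothing. This is precisely the point the paper's proof addresses: since $A$ is normal, $F=\Ga/A$ acts on $\what A$, irreducibility of $\pi$ forces the spectral measure of $\pi|_A$ to be $F$-ergodic, and an ergodic measure for a finite group action is concentrated on a single finite orbit; hence $\pi|_A$ is a finite direct sum of characters, after which your argument (or the paper's direct conclusion) goes through. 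Citing the unproved Example at the start of the section does not discharge this, since that Example is established by exactly this argument. Your caveat about the countability hypothesis in Thoma's theorem is well taken and is not addressed in the paper.
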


\begin{proof}
The equivalence of (c) and (d) is in the paper \cite{Thoma}.
The equivalence of (a) and (b) has been shown in Examples \ref{Ex1}.
We now show (d)$\Rightarrow$(b).
So let $\pi\in \what\Ga$. The restriction to $A$ must be a direct integral over 
 and this integral is extended over one $F$-orbit in $\what A$ only. As $F$ is finite, so is the orbit, to $\pi|_A$ is the direct sum of finitely many one-dimensional representations, hence $\pi$ is finite-dimensional.

Finally, if (b) holds, then for every $\pi\in\what\Ga$, the set $\pi(\Ga)$ generates a finite dimensional factor von Neumann algebra $\CA(\pi)$. As every finite-dimensional factor von Neumann algebra is of type I, the algebra $\CA(\pi)$ is of type I, and as $\pi$ was arbitrary, the group $\Ga$ is of type I.
\end{proof}

\section{Semi-direct products}
Let $G=A\rtimes Q$ be a semi-direct product of locally compact groups, where $A$ is abelian.
Assume that $G$ is unimodular and that the quotient map $\what A\to \what A/Q$ has a Borel section.
The latter condition is known as \e{Mackey-regularity} 
\cites{Mackey,Varadarajan}.
It implies that for each $\pi\in\what G$ the representation $\pi|_A$ is a direct integral over a single $Q$-orbit $\CO_\pi$ in $\what A$.
A counterexample to Mackey-regularity is the Mautner group \cite{Baggett2}.

\begin{theorem}
Let $G=N\rtimes Q$ be a Mackey-regular semi-direct product of locally compact groups, with $N$ being abelian and $Q$ metrizable.
Let $Q_0$ denote the pointwise stabilizer of $N$ in $Q$. Assume that $Q_0$ is unimodular, $Q/Q_0$ is compact and that for each $\chi\in\what N$ the stabilizer $Q_\chi$ in $Q$ is trace class.
Then $G$ is trace class.
\end{theorem}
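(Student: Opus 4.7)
The plan is to apply Mackey's little-group method, realize each $\pi \in \what G$ as an induced representation from a subgroup with compact base, and show via a kernel computation that $\pi(f)$ is Hilbert--Schmidt; Proposition~\ref{prop1.6} will then upgrade this to trace class. Mackey-regularity together with the abelianness of $N$ implies that every $\pi \in \what G$ has the form $\pi = \Ind_{H_\chi}^G(\chi\boxtimes\sigma)$, where $\chi\in\what N$, $H_\chi := N \rtimes Q_\chi$, $\sigma\in\what{Q_\chi}$, and $(\chi\boxtimes\sigma)(n,q) = \chi(n)\sigma(q)$ (a genuine representation of $H_\chi$ because $Q_\chi$ fixes $\chi$). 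Since $Q_0$ acts trivially on $N$, we have $Q_0 \subseteq Q_\chi$, and $G/H_\chi \cong Q/Q_\chi$ is a continuous image of the compact group $Q/Q_0$, hence compact.

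\emph{Kernel computation.} Pick a Borel section $s\colon G/H_\chi\to G$ (available because $Q$ is metrizable), realize $\pi$ on $L^2(G/H_\chi,V_\sigma)$, and apply Weil's integration formula. For $f\in C_c^\infty(G)$, a direct substitution shows that $\pi(f)$ is an integral operator whose operator-valued kernel is, up to a bounded modular factor,
$$
K(\bar x, \bar y) \;=\; \sigma\bigl(\tilde F_{\bar x, \bar y}\bigr),
\qquad
\tilde F_{\bar x, \bar y}(q) \;=\; \int_N f\bigl(s(\bar x)(n,q)s(\bar y)^{-1}\bigr)\chi(n)\,dn.
$$
Because $f$ has compact support in $G$ and $(\bar x,\bar y)$ ranges over the compact set $G/H_\chi\times G/H_\chi$, the functions $\tilde F_{\bar x,\bar y}\in C_c^\infty(Q_\chi)$ share a common compact support $K_0\subset Q_\chi$, and $(\bar x,\bar y)\mapsto \tilde F_{\bar x,\bar y}$ is continuous into the Fr\'echet space $C_{K_0}^\infty(Q_\chi)$.

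\emph{Conclusion via Proposition~\ref{prop1.6}.} As $Q_\chi$ is trace class, each $\sigma(\tilde F_{\bar x, \bar y})$ is trace class, in particular Hilbert--Schmidt. By Proposition~\ref{lem1.3}(b), $g\mapsto\norm{\sigma(g)}_{HS}$ is continuous on $C_c^\infty(Q_\chi)$, so $(\bar x,\bar y)\mapsto\norm{K(\bar x,\bar y)}_{HS}^2$ is continuous and hence bounded on the compact product $G/H_\chi \times G/H_\chi$, giving $\norm{\pi(f)}_{HS}^2 = \int\!\int \norm{K(\bar x,\bar y)}_{HS}^2\, d\bar x\, d\bar y < \infty$. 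Proposition~\ref{prop1.6} then yields $\pi(f)$ trace class, so $G$ is trace class. The main obstacle is the kernel identification itself: setting up the induced representation with its quasi-invariant measure, tracking the modular function of $H_\chi$ (one would like to deduce the unimodularity of $H_\chi$ from that of $Q_0$ together with the unimodularity of $G$ and compactness of $Q/Q_0$), and choosing the section well enough (or using a partition-of-unity argument on a cover by continuous local sections) to make the claimed continuity into $C_{K_0}^\infty(Q_\chi)$ genuine. Once $K(\bar x,\bar y)$ is identified as $\sigma$ of a test function on $Q_\chi$, the trace-class hypothesis on $Q_\chi$ delivers the Hilbert--Schmidt bound for free.
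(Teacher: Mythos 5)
Your proposal follows essentially the same route as the paper's proof: Mackey's little-group decomposition $\pi=\Ind_{N\rtimes Q_\chi}^G(\chi\otimes\sigma)$, realization of $\pi(f)$ as an integral operator on $L^2$ of the compact quotient with kernel obtained by integrating over $N$ against $\chi$ and applying $\sigma$ to a test function on $Q_\chi$, continuity of the Hilbert--Schmidt norm (Proposition~\ref{lem1.3}(b)) plus compactness to bound the kernel, and Proposition~\ref{prop1.6} to upgrade Hilbert--Schmidt to trace class. The technical points you flag at the end (unimodularity of $N\rtimes Q_\chi$ via cocompactness of $N\times Q_0$, and a measurable section with relatively compact image) are resolved in the paper exactly as you suggest, via \cite{HA2}*{Proposition 9.1.2} and Feldman--Greenleaf respectively.
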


\begin{proof}
Let $\pi\in\what G$.
By Mackey's theory there exists $\chi\in\what N$ and an irreducible representation $(\eta,V_\eta)$ of the stabilizer $G_\chi$, extending $\chi$, such that $\pi=\Ind_{G_\chi}^G(\eta)$.
The space $V_\pi$ of $\pi$ is the space of all measurable functions $\phi:G\to V_\eta$ such that $\phi(zx)=\eta(z)\phi(x)$ holds for all $z\in G_\chi$ and all $x\in G$, and $\int_{G_\chi\bs G}\norm{\phi(x)}^2\,dx<\infty$.
The representation $\pi$ is defined by right translations, $\pi(y)\phi(x)=\phi(xy)$.
As $Q_\chi$ is metrizable and cocompact in $Q$, there exists a measurable section $s:Q_\chi\bs Q\to Q$, whose image has compact closure see \cite{FG}, in particular Remark 6 (where one should read $KCG/M$ as $K\subset G/M$).
As $Q_0$ is unimodular, 
then so is $N\times Q_0$ and as $N\times Q_0$ is cocompact in $G$, any closed group $H$ with $N\times Q_0\subset H\subset G$ is unimodular by \cite{HA2}*{Proposition 9.1.2}.

For $f\in C_c^\infty(G)$ and $\phi\in V_{\pi}$ we compute
\begin{align*}
\pi(f)\phi(x)&=\int_Gf(y)\phi(xy)\,dy\\
&=\int_Gf(x^{-1}y)\phi(y)\,dy\\
&=\int_{G_\chi\bs G}\int_{G_\chi}f(x^{-1}g_\chi\bar g)\eta(g_\chi)\,dg_\chi\,\,\phi(s(\bar g))\,d\bar g\\
\end{align*}
Note that $G_\chi=N\rtimes Q_\chi$.
As $\pi(f)\phi$ is determined by its values on $s(Q_\chi\bs Q)$ we may assume $x=s(\bar x
)\in Q$.
Then 
$$
\pi(f)\phi(s(\bar x))
$$
equals
$$
\int_{Q_\chi\bs Q}\int_N\int_{Q_\chi} f(s(\bar x)^{-1}ns(\bar x),s(\bar x)^{-1}q_\chi\bar q)\eta(n,q_\chi)\,dq_\chi\,dn\ \phi(s(\bar q))\,d\bar q.
$$
We may therefore view $\pi(f)$ as an integral operator on the Hilbert space $L^2(Q_\chi\bs Q,V_\eta)$ with kernel
$$
k(\bar x,\bar q)= \int_N\int_{Q_\chi} f(s(\bar x)^{-1}ns(\bar x),s(\bar x)^{-1}q_\chi\bar q)\eta(n,q_\chi)\,dq_\chi\,dn
$$
As the dependence on $\bar x$ is through $s(\bar x)$ only, we may write $k(\bar x,\bar q)=\tilde k(s,\bar q)$, where $\tilde k$ is a map on $Q\times (Q_\chi\bs Q)$.
Since $f$ has compact support and the image of $s$ has compact closure, the integral over $N$ can be replaced with $\int_K$ for some compact subset $K\subset N$.
As $N$ acts on $V_\eta$ through the character $\chi$, the representation $\eta|_{Q_\chi}$ remains irreducible. Since $Q_\chi$ is trace class, for fixed $a\in N$ and $\bar x,\bar q\in Q_\chi\bs Q$ the integral over $Q_\chi$ defines a trace class operator $K_{s(\bar x),n,\bar q}(f)$ on $V_\eta$.
As the map $(s,n,\bar q)\mapsto \norm{K_{s,n,\bar q}(f)}_{HS}$ is continuous by Proposition \ref{lem1.3}, the map
$$
(s,\bar q)\mapsto \int_N\norm{K_{s,n,\bar q}(f)}_{HS}\,dn=\int_K\norm{K_{s,n,\bar q}(f)}_{HS}\,dn
$$
is continuous on $Q\times(Q_\chi\bs Q)$.
As the image of the map $s$ is relatively compact, the map $(\bar x,\bar q)\mapsto \int_N \norm{K_{s(\bar x),n,\bar q}(f)}_{HS}\,dn$ is bounded, by, say $C>0$.
It follows that
$$
\norm{\int_N K_{s(\bar x),n,\bar q}(f)\,dn}_{HS}\le \int_N \norm{K_{s(\bar x),n,\bar q}(f)}_{HS}\,dn\le C.
$$
Therefore, by the compactness of $Q_\chi\bs Q$,
$$
\int_{Q_\chi\bs Q}\int_{Q_\chi\bs Q}\norm{\int_N K_{s(\bar x),n,\bar q}(f)\,dn}_{HS}^2\,d\bar x\,d\bar q
\le 
\int_{Q_\chi\bs Q}\int_{Q_\chi\bs Q}C^2\,d\bar x\,d\bar q<\infty.
$$
This double integral is the square of the Hilbert-Schmidt norm of the operator $\pi(f)$ which therefore is finite, hence $\pi(f)$ is a Hilbert-Schmidt operator and as $f$ was arbitrary, $G$ is trace class.
\end{proof}

In the case of a semi-direct product of abelian groups we can give an `if and only if' criterion for trace class.
The following proposition was part of the thesis of F.J.M. Klamer (Groningen 1979), which unfortunately has never been published.

Let $G=N\rtimes Q$, where $N$ and $Q$ are abelian.
Assume that the Haar measure of $N$ is $Q$-invariant, or, which amounts to the same, that $G$ is unimodular. 
Let $\chi\in\what N$ and let $Q_\chi$ be its stabilizer in $Q$. Select $\rho\in\what Q_\chi$ and define the representation $\pi=\pi_{\chi,\rho}$ as 
$$
\pi_{\chi,\rho}=\Ind_{N\rtimes Q_\chi}^G\chi\otimes\rho.
$$
Then $\pi_{\chi,\rho}$ is irreducible.

\begin{theorem}
The representation $\pi_{\chi,\rho}$ is trace class if and only if the $Q$-invariant Radon measure $\mu$ on the $Q$-orbit $\CO_\chi\subset\what N$ of $\chi$, which is unique up to scaling, yields a tempered measure  on the dual group $\what N$, i.e., $\int_{\what N}\phi\,d\mu$ exists for every Schwartz-Bruhat function $\phi$ on $\what N$.
\end{theorem}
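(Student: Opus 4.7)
The plan is to realize $\pi=\pi_{\chi,\rho}$ as an integral operator on $L^2(\mathcal O_\chi,\mu)$, compute the Hilbert--Schmidt norm $\|\pi(f)\|_{HS}^2$ explicitly, and reduce the nontrivial implication to the Bochner--Schwartz theorem. By Proposition~\ref{prop1.6}, ``trace class'' and ``Hilbert--Schmidt'' coincide as conditions on all $\pi(f)$, so one works with HS norms throughout.

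Since $Q$ is abelian and $\rho$ is one-dimensional, Mackey's construction identifies $V_\pi\cong L^2(\mathcal O_\chi,\mu)$ via the bijection $Q_\chi\bs Q\to\mathcal O_\chi$, $\bar q\mapsto q\cdot\chi$. In this picture $\pi(n)$ is multiplication by $\xi\mapsto\xi(n)$, while $\pi(q)$ is a unitary $\rho$-twisted translation, so the $\rho$-twist is invisible to HS norms. A direct calculation of the kernel of $\pi(f)$ for $f\in C_c^\infty(G)$, using the factorization $\pi((n,q))=\pi((n,e))\pi((e,q))$ together with the assumed $Q$-invariance of Haar measure on $N$, yields
\begin{equation*}
\|\pi(f)\|_{HS}^2=\int_{\mathcal O_\chi}F_f(\xi)\,d\mu(\xi),\qquad F_f(\xi):=\int_Q|\hat f^{(1)}(\xi,r)|^2\,dr,
\end{equation*}
where $\hat f^{(1)}(\xi,r):=\int_N f(n,r)\xi(n)\,dn$ is the partial Fourier transform of $f$ in the $N$-variable. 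For $f\in C_c^\infty(G)$ the function $F_f$ is non-negative and Schwartz--Bruhat on $\widehat N$: the integrand is Schwartz--Bruhat in $\xi$ with seminorms depending continuously on $r$, and $r$ ranges over a compact set. Thus the ``if'' direction follows at once: if $\mu$ is tempered, then $\int F_f\,d\mu<\infty$ for every $f$, so $\pi(f)$ is HS and $\pi$ is trace class.

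For the converse, assume $\pi$ is trace class, specialize $f=f_1\otimes f_2$ to obtain $\int|\hat f_1|^2\,d\mu<\infty$ for every $f_1\in C_c^\infty(N)$, and use polarization together with Cauchy--Schwarz to get absolutely convergent integrals $\int\widehat{f*g^*}\,d\mu$ for all $f,g\in C_c^\infty(N)$. The Dixmier--Malliavin identity $C_c^\infty(N)=C_c^\infty(N)*C_c^\infty(N)$ then extends this to absolute convergence of $\int\hat h\,d\mu$ for every $h\in C_c^\infty(N)$. Proposition~\ref{lem1.3}(a), applied to the net $h\mapsto\int_{K_\alpha}\hat h\,d\mu$ of continuous functionals with $K_\alpha\nearrow\widehat N$ a compact exhaustion, shows that $T:h\mapsto\int\hat h\,d\mu$ is a continuous linear functional on $C_c^\infty(N)$, i.e.\ a distribution on $N$. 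The identity $\langle T,f*f^*\rangle=\int|\hat f|^2\,d\mu\ge 0$ makes $T$ positive-definite, so by Bochner--Schwartz $T=\hat\nu$ for a positive tempered measure $\nu$ on $\widehat N$; Fourier uniqueness for positive Radon measures then forces $\mu=\nu$, and hence $\mu$ is tempered.

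The principal technical point is the appeal to Bochner--Schwartz in its LCA-group form, which provides the bridge from the positive-definite distribution $\hat\mu$ on $N$ to a tempered measure on $\widehat N$; the rest of the argument is bookkeeping built from Mackey's realization, $Q$-invariance of $\mu$ on $\mathcal O_\chi$, and Dixmier--Malliavin.
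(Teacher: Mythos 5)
Your overall route coincides with the paper's: both realize $\pi(f)$ as an integral operator, prove the ``if'' direction by dominating a kernel integral against $\mu$ by Schwartz--Bruhat data, and prove the ``only if'' direction by specializing to product test functions, extracting a positive-definite distribution whose Fourier transform is $\mu$, and invoking Schwartz's theorem together with Dixmier--Malliavin. The only architectural differences are that you work with Hilbert--Schmidt norms and Proposition \ref{prop1.6} where the paper works with the diagonal kernel integral and Lemma \ref{lem1.8}, and that your converse bypasses the trace functional entirely; these are cosmetic.

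There is, however, a concrete error at the centre of your computation. The representation space is $L^2(Q_\chi\bs Q)\cong L^2(\CO_\chi,\mu)$, and the kernel of $\pi(f)$ is
$$
k_f(\bar x,\bar y)=\int_{Q_\chi}\hat f^{(1)}\bigl(\chi^x,\,x^{-1}q_\chi s(\bar y)\bigr)\rho(q_\chi)\,dq_\chi,
$$
i.e.\ the second variable of $\hat f^{(1)}$ is first pushed down from $Q$ to $Q_\chi\bs Q$ by integrating against $\rho$ over the fibre $Q_\chi$, and only then is the modulus squared and integrated. Hence $\norm{\pi(f)}_{HS}^2$ is \emph{not} equal to $\int_{\CO_\chi}\int_Q|\hat f^{(1)}(\xi,r)|^2\,dr\,d\mu(\xi)$: your claim that ``the $\rho$-twist is invisible to HS norms'' is true for the individual unitaries $\pi(g)$ but false for the smeared operators $\pi(f)$, whose Hilbert--Schmidt norm genuinely depends on $\rho$. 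Already in the degenerate case $Q_\chi=Q$ your formula gives $\mu(\{\chi\})\int_Q|\hat f^{(1)}(\chi,r)|^2\,dr$ while the true value is $\bigl|\int_Q\hat f^{(1)}(\chi,r)\rho(r)\,dr\bigr|^2$. The error is repairable: for the ``if'' direction, Cauchy--Schwarz over the fibre (whose intersection with any translate of $\supp_Q f$ has uniformly bounded Haar measure in $Q_\chi$) turns your right-hand side into a valid upper bound, which suffices; for the ``only if'' direction, the product case $f=f_1\otimes f_2$ gives exactly $\norm{\pi(f)}_{HS}^2=c\int_{\CO_\chi}|\hat f_1|^2\,d\mu$ with $c=\norm{P_\rho f_2}^2_{L^2(Q_\chi\bs Q)}$, where $P_\rho f_2(\bar y)=\int_{Q_\chi}f_2(q_\chi s(\bar y))\rho(q_\chi)\,dq_\chi$ --- but you must then note, as the paper does for its constant $C_\psi$, that $f_2$ can be chosen with $c\ne 0$. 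With these two local repairs your argument closes and is essentially the paper's.
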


This theorem yields new interesting classes of counterexamples like the group
$$
\left\{\mat {e^t}\ \ {e^{-t}}: t\in\R\right\}\ltimes \R^2,
$$
which by the theorem is not trace class.

\begin{proof}
As in the proof of the last theorem one sees that for $f\in C_c^\infty(G)$ the operator $\pi(f)$ has kernel
\begin{align*}
k_f(x,y)&= \int_N\int_{Q_\chi} f( x^{-1}nx, x^{-1}q_\chi y)\chi(n)\rho(q_\chi)\,dq_\chi\,dn\\
&=\int_{Q_\chi}\int_N f(n, x^{-1}q_\chi y)\chi(xnx^{-1})\,dn\ \rho(q_\chi)\,dq_\chi,\\
&=\int_{Q_\chi}\int_N f(n, x^{-1}q_\chi y)\chi^x(n)\,dn\ \rho(q_\chi)\,dq_\chi.\\
\end{align*}
Let $h_q(n)=f(n,q)$, then, as we integrate over $N$, for $x=y$ we get
$$
k_f(x,x)
=\int_{Q_\chi}\hat h_{q_\chi}(\chi^x) \rho(q_\chi)\,dq_\chi.
$$
The map $q\mapsto \hat h_q$ is a continuous map of compact support from $Q$ to $\CS(\what N)$.
So if the measure $\mu$ extends as in the theorem, then the map $q\mapsto \int_{Q_\chi\bs Q}\hat h_q(\chi^x)\,dx$ is continuous of compact support, hence the integral 
$$
\int_{Q_\chi\bs Q}k_f(x,x)\,dx=\int_{Q_\chi}\int_{Q_\chi\bs Q}\hat h_{q_\chi}(\chi^x)\rho(q_\chi)\,dx\,dq_\chi
$$
exists, so that by Lemma \ref{lem1.8} it follws that $\pi_{\chi,\rho}$ is trace class.

For the converse direction assume that $\pi_{\chi,\rho}$ is trace class.
Consider the special case $f=\phi\otimes\psi$ by which we mean $f(n,q)=\phi(n)\psi(q)$ with $\phi\in C_c^\infty(N)$ and $\psi\in C_c^\infty(Q)$.
Fix $\psi$ and let $T_\psi(\phi)=\tr\pi(\phi\otimes\psi)$.
Then $h_q(n)=\phi(n)\psi(q)$ and hence
$$
T_\psi(\phi)=\underbrace{\int_{Q_\chi}\psi(q_\chi)\rho(q_\chi)\,dq_\chi}_{=C_\psi} \int_{Q_\chi\bs Q}\hat\phi(\chi^x)\,dx=C_\psi\int_{\CO_\chi}\hat\phi(z)\,d\mu(z).
$$
Note that $\psi$ can be chosen such that the constant $C_\psi$ is non-zero.
So, up to a constant, $T_\psi$ is the Fourier transform of the measure $\mu$.
Since $(n,q)^{-1}=(n^{-q},q^{-1})$ we compute
\begin{align*}
f*f^*(n,q)
&= \int_Gf(n_1,q_1)f^*((n_1,q_1)^{-1}(n,q))\,dn_1\,dq_1\\
&= \int_Gf(n_1,q_1)\ol{f((n,q)^{-1}(n_1,q_1))}\,dn_1\,dq_1\\
&= \int_Gf(n_1,q_1)\ol{f((n^{-q},q^{-1})(n_1,q_1))}\,dn_1\,dq_1\\
&= \int_Gf(n_1,q_1)\ol{f(n^{-q}n_1^q,q^{-1}q_1)}\,dn_1\,dq_1=\phi*\phi^*(n^q)\,\psi*\psi^*(q).
\end{align*}
As $\tr\pi(f*f^*)\ge 0$ we conclude that the distribution $T_{\psi*\psi^*}$ is positive definite. Hence it is tempered \cite{Schwartz}*{Theoreme XVIII, p. 132}.
By the same theorem, $T_{\psi*\psi^*}$ is the Fourier-transform of a tempered Radon-measure on $\what N$.
Using $C_c^\infty(N)=C_c^\infty(N)*C_c^\infty(N)$ and polarizing, one sees that the same assertions follow for $T_\psi$.
But by the formula above, $T_{\psi}$ is $C_\psi$ times the Fourier-transform of the measure $\mu$, which therefore is tempered. 
\end{proof}

{\bf Remark.}
There remain many open questions. For instance, is every trace class group unimodular? Is it true that a group $G$ is of trace class if and only if it admits a uniform lattice?
Let $G$ be a connected Lie group. Is it true that $G$ is trace class if and only if $G_{red}$ acts with closed orbits on the dual $\what N$ of the unipotent radical $N$?
Is a unipotent algebraic group over a local field trace class?
(The case of characteristic zero has been dealt with, see Proposition \ref{prop1.7} and Theorem \ref{thm4.3}.)

\section{Distributional realizations}
Let $G$ be a locally compact group and let $C_c^\infty(G)'$ denote the space of all continuous linear forms on $C_c^\infty(G)$.
We equip $C_c^\infty(G)'$ with the topology of point-wise convergence.
The group $G$ acts continuously on $C_c^\infty(G)'$ by left- and right translations and these two commute with each other, so the group $G\times G$ acts continuously on $C_c^\infty(G)'$.
We say that a unitary representation $(\alpha,V_\alpha)$ of $G\times G$ can be realized as a \e{$G\times G$-stable subspace} of $\CE=C_c^\infty(G)'$, if there exists an injective continuous linear $G\times G$-map $j:V_\alpha\to \CE$.

If $\pi$ and $\eta$ are irreducible unitary representations of $G$, then $\pi
\otimes\eta$ is an irreducible unitary representation of $G\times G$ on the Hilbert space completion $V_\pi\hat\otimes V_\eta$ of the algebraic tensor product $V_\pi\otimes V_\eta$.
If $G$ is type I, then every irreducible unitary representation of $G\times G$ is such a tensor product with uniquely determined factors.

The following theorem is due to F.J.M. Klamer (thesis, University of Groningen 1979).
Unfortunately, it has never been published.

\begin{theorem}\label{thm3.1}
Let $G$ be a  locally compact group. Then the irreducible unitary representation $\pi\otimes\eta$ of $G\times G$ can be realized as a $G\times G$ stable subspace of $\CE$ if and only if $\eta$ is isomorphic to the dual representation $\pi'$ of $\pi$ and $\pi$ is a trace class representation.

In particular, a type I group $G$ is trace class if and only if for every $\pi\in\what G$ the representation $\pi\hat\otimes\pi'$ can be realized as a subspace of $\CE$.
\end{theorem}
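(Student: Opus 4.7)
Suppose $\pi$ is trace class and $\eta\cong\pi'$. Identify $V_\pi\hat\otimes V_{\pi'}$ with the Hilbert space $\mathrm{HS}(V_\pi)$ of Hilbert--Schmidt operators on $V_\pi$, and for $A\in\mathrm{HS}(V_\pi)$ define
$$
j(A)(f):=\tr\bigl(\pi(f)\,A\bigr).
$$
This makes sense because $\pi(f)$ is trace class, hence $\pi(f)A$ is trace class. The bound $|j(A)(f)|\le\norm{\pi(f)}_{HS}\norm A_{HS}$ combined with continuity of $f\mapsto\norm{\pi(f)}_{HS}$ from Proposition~\ref{lem1.3}(b) shows that $j(A)\in\CE$ and that $j$ is continuous into $\CE$ with the topology of pointwise convergence. $G\times G$-equivariance is a direct calculation from $\pi(L_{g_1^{-1}}R_{g_2^{-1}}f)=\pi(g_1)\pi(f)\pi(g_2^{-1})$: the translation action on distributions transfers to $A\mapsto\pi(g_1)A\pi(g_2^{-1})$ on $\mathrm{HS}(V_\pi)$, which matches $\pi\otimes\pi'$. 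For injectivity, $\ker j$ is a closed $G\times G$-invariant subspace of the irreducible representation $V_\pi\hat\otimes V_{\pi'}$, so either zero or the whole space; the latter is ruled out since $j(v\otimes v^*)(f)=\sp{\pi(f)v,v}$ is a nonzero matrix coefficient.

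\textbf{The converse, step 1 ($\eta\cong\pi'$).} Suppose $j\colon V_\pi\hat\otimes V_\eta\hookrightarrow\CE$ is a continuous $G\times G$-equivariant injection. For each $f\in C_c^\infty(G)$ the evaluation $\xi\mapsto j(\xi)(f)$ is a continuous linear form on the Hilbert space $V_\pi\hat\otimes V_\eta$, so the bilinear form $B_f(v,w):=j(v\otimes w)(f)$ is continuous on $V_\pi\times V_\eta$ in the Hilbert tensor norm. Fix $w\in V_\eta$ and use Riesz to view $F_w\colon C_c^\infty(G)\to V_{\pi'}$ defined by $F_w(f)(v)=B_f(v,w)$ as a continuous $G$-equivariant map, where $G$ acts on $C_c^\infty(G)$ by left translation and on $V_{\pi'}$ by the contragredient. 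By the standard structure theorem for such equivariant maps, there exists a unique generalized vector $v_0(w)\in V_{\pi'}^{-\infty}$ with $F_w(f)=\pi'(f)v_0(w)$. The $\{e\}\times G$-equivariance relation $F_{\eta(g)w}(f)=F_w(R_{g^{-1}}f)$, using $\pi'(R_{g^{-1}}f)=\pi'(f)\pi'(g)$ (modulo a modular factor when $G$ is non-unimodular), forces $v_0(\eta(g)w)=\pi'(g)v_0(w)$. Thus $w\mapsto v_0(w)$ is a $G$-intertwiner $V_\eta\to V_{\pi'}^{-\infty}$. Post-composing with a smoothing $\pi'(f)$ gives an intertwiner into $V_{\pi'}$ itself, which is nonzero since $j\ne 0$; Schur's lemma then yields $\eta\cong\pi'$.

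\textbf{The converse, step 2 ($\pi$ is trace class).} With $\eta\cong\pi'$ identified via $v_0$, the bilinear form reads $B_f(v,w)=\sp{\pi(\check f)v,w}$ (with $\check f(x)=f(x^{-1})$, up to a scalar and, if $G$ is non-unimodular, a modular factor). Continuity of $B_f$ in the Hilbert tensor norm on $V_\pi\hat\otimes V_{\pi'}$ is precisely equivalent to $\pi(\check f)$ being a Hilbert--Schmidt operator, under the standard identification of continuous bilinear forms on a Hilbert tensor product with Hilbert--Schmidt operators. Since $C_c^\infty(G)$ is stable under $f\mapsto\check f$, every $\pi(f)$ is Hilbert--Schmidt, and Proposition~\ref{prop1.6} upgrades this to $\pi(f)$ being trace class for every test function $f$. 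The concluding assertion, that $G$ is trace class if and only if every $\pi\hat\otimes\pi'$ embeds in $\CE$, is an immediate consequence applied to each $\pi\in\what G$.

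\textbf{Main obstacle.} The heart of the argument is the converse direction, where from the single datum of a $G\times G$-equivariant embedding one has to extract both the identification $\eta\cong\pi'$ \emph{and} the Hilbert--Schmidt property of every $\pi(f)$. The delicate step is the appeal to the structure theorem presenting $G$-equivariant maps $C_c^\infty(G)\to V_{\pi'}$ via distribution vectors, which requires a short integration-against-approximate-identity argument. Careful bookkeeping of Riesz identifications, contragredient conventions, and (for non-unimodular $G$) modular functions is where most of the technical friction lies; once the bilinear form $B_f$ is correctly identified with $\sp{\pi(\check f)\cdot,\cdot}$, the HS conclusion is essentially formal.
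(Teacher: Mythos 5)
Your forward direction is essentially the paper's Lemma \ref{lem3.2} rewritten in operator language, and your step 2 contains a genuinely nice shortcut: the observation that continuity of $\xi\mapsto j(\xi)(f)$ on the \emph{Hilbert space} $V_\pi\hat\otimes V_\eta$ (which is exactly what continuity of $j$ into $\CE$ with the pointwise topology gives) forces the bilinear form $B_f$ to be Hilbert--Schmidt is correct, and it would replace the paper's much heavier route through the reproducing distribution $D(f*g)=\sp{j^*(f),j^*(g^*)}$, the Hilbert algebra structure on $j^*(C_c^\infty(G))$, and Dixmier's uniqueness of the faithful semi-finite normal trace on the factor $\CL\cong\CB(V_\pi)$. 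Combined with Proposition \ref{prop1.6} this would finish the converse --- \emph{provided} your step 1 is sound.

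As written, it is not. The decisive flaw is the sentence ``post-composing with a smoothing $\pi'(f)$ gives an intertwiner into $V_{\pi'}$'': from $F_{\eta(g)w}(f)=F_w(R_{g^{-1}}f)$ you obtain $\pi'(f)v_0(\eta(g)w)=\pi'(f)\pi'(g)v_0(w)$, and since $\pi'(f)$ and $\pi'(g)$ do not commute, the map $w\mapsto\pi'(f)v_0(w)$ does \emph{not} intertwine $\eta$ with $\pi'$, so the ordinary Schur lemma does not apply. What you actually have is a continuous intertwiner $V_\eta\to V_{\pi'}^{-\infty}$, and concluding $\eta\cong\pi'$ from that requires a Schur lemma for closable densely defined intertwiners (e.g.\ via the transpose $V_{\pi'}^\infty\to V_{\eta'}$ and polar decomposition) --- which is precisely the work the paper's Lemma \ref{lem3.3} does by a different device, namely the antilinear isometry $\Theta(j^*(f))=j^*(f^*)$ built from the symmetry $D(f*g)=D(g*f)$. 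Two further gaps compound this. First, that symmetry, and your own relation $\pi'(R_{g^{-1}}f)=\Delta(g)\pi'(f)\pi'(g)$, show that unimodularity is not a bookkeeping nuisance: in the non-unimodular case $v_0$ intertwines $\eta$ with the non-unitary twist $\Delta\cdot\pi'$, and ``modulo a modular factor'' is not a repair; the paper first pulls the realization back to the unimodular group $G\ltimes_\Delta\R$, and you need this (or an equivalent) reduction. Second, the ``standard structure theorem'' presenting continuous left-equivariant maps $C_c^\infty(G)\to V_{\pi'}$ as $f\mapsto\pi'(f)v_0$ with $v_0$ a distribution vector is standard for Lie groups but needs an actual argument for Bruhat's $C_c^\infty(G)$ on a general locally compact group; the paper deliberately avoids distribution vectors altogether. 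If you repair step 1 (say by proving $\eta\cong\pi'$ along the paper's lines) your step 2 does yield a cleaner finish than the von Neumann algebra argument.
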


Note that the topology of $V_\pi\hat\otimes V_{\pi'}$ as a subspace of $\CE$ does in general not coincide with the Hilbert space topology.

The proof of the theorem will be given in a sequence of lemmas.

\begin{lemma}\label{lem3.2}
If $G$ is a locally compact group and $\pi\in\what G$ is a trace class representation, then $\pi\hat\otimes\pi'$ can be realized as a stable subspace of $\CE$.
\end{lemma}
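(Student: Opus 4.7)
The plan is to identify the Hilbert space $V_\pi\hat\otimes V_{\pi'}$ with the space $HS(V_\pi)$ of Hilbert--Schmidt operators on $V_\pi$ via the standard isomorphism sending a simple tensor $v\otimes\lambda$ to the rank-one operator $u\mapsto\lambda(u)v$. Under this identification, the tensor product representation $\pi\otimes\pi'$ of $G\times G$ becomes the action $(g_1,g_2)\cdot A=\pi(g_1)A\pi(g_2)^{-1}$ on $HS(V_\pi)$. The candidate intertwiner $j:HS(V_\pi)\to\CE$ is then defined by
\[
j(A)(f)=\tr\bigl(\pi(f)A\bigr),
\]
which makes sense because the trace-class hypothesis on $\pi$ forces $\pi(f)$ to be trace class and $A$ is bounded, so $\pi(f)A$ is again trace class.

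Next I would verify the analytic properties of $j$. Cauchy--Schwarz in the Hilbert--Schmidt inner product yields
\[
|j(A)(f)|\le\norm{\pi(f)}_{HS}\norm{A}_{HS},
\]
and combining this with the continuity of $f\mapsto\norm{\pi(f)}_{HS}$ from Proposition \ref{lem1.3}(b) shows simultaneously that $j(A)$ is a continuous linear functional on $C_c^\infty(G)$ (so really lies in $\CE$) and that $j$ itself is continuous from the Hilbert norm topology to the topology of pointwise convergence on $\CE$.

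For the $G\times G$-equivariance of $j$, I would carry out a direct trace computation using the standard identities $\pi(g)\pi(f)=\pi(L_g f)$ and $\pi(f)\pi(g)=\pi(R_g f)$ together with the cyclic property of the trace, matching the conjugation action $A\mapsto\pi(g_1)A\pi(g_2)^{-1}$ with the action on $\CE$ by left and right translations of distributions.

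For injectivity, the key observation is that $\pi\otimes\pi'$ is an irreducible unitary representation of $G\times G$: the von Neumann algebra generated by $\pi(G)$ is $\CB(V_\pi)$ by Schur, and passing to the completed tensor product produces $\CB(V_\pi\hat\otimes V_{\pi'})$, so the commutant of the $G\times G$-action is trivial. Consequently $\ker j$ is a closed $G\times G$-invariant subspace of $V_\pi\hat\otimes V_{\pi'}$ and must be either $\{0\}$ or everything. To rule out the latter, pick a unit vector $v\in V_\pi$ and take $A$ to be the rank-one projection onto $\C v$; then $j(A)(f)=\sp{\pi(f)v,v}$, which tends to $1$ as $f$ runs through an approximate identity at the identity of $G$, so $j(A)\neq 0$ and hence $\ker j=\{0\}$. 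The main obstacle I anticipate is the bookkeeping needed in the equivariance step to align the left/right and inverse conventions of the $G\times G$-action on $\CE$ with the conjugation action on $HS(V_\pi)$; the underlying trace manipulations are routine but require care.
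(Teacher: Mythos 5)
Your argument is correct and is essentially the paper's own proof written in the coordinate-free language of Hilbert--Schmidt operators: the paper likewise embeds via matrix coefficients $j(v\otimes\al)(x)=\sp{v|\pi'(x)\al}$, obtains continuity from the Cauchy--Schwarz inequality against $\norm{\pi(f)}_{HS}$ (your estimate $|\tr(\pi(f)A)|\le\norm{\pi(f)}_{HS}\norm{A}_{HS}$ is exactly its orthonormal-basis computation), and gets injectivity from $j\ne 0$ plus irreducibility of $\pi\otimes\pi'$. The only residual point is the bookkeeping you already flag: with $j(A)(f)=\tr(\pi(f)A)$ the first factor of $G\times G$ ends up acting by right translations, which one matches to the paper's convention by replacing $f$ with $x\mapsto\Delta(x)^{-1}f(x^{-1})$ (as the paper implicitly does by using $\pi'(x)$ rather than $\pi(x)$).
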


\begin{proof}
Let $(e_i)$ denote an orthonormal basis of $V_\pi$ and $(e_i')$ the dual basis of $V_{\pi'}$.
Denote the duality pairing between $V_\pi$ and $V_{\pi'}$ by $\sp{v|\al}=\al(v)$.
We define a map $j:V_\pi\otimes V_{\pi'}\to C(G)\subset\CE$ by
$$
j(v\otimes\al)(x)=\sp{v|\pi'(x)\al}.
$$
Then $j$ is linear and $G\times G$ equivariant in the sense that
$$
j(\pi(x)\otimes\pi'(y)(v\otimes\al))=L_xR_yj(v\otimes\al).
$$
For $f\in C_c^\infty(G)$ one has
\begin{align*}
j\(\sum_{i,j}\la_{i,j}e_i\otimes e_j'\)(f)
&= \sum_{i,j}\la_{i,j}\int_G{f(x)}\sp{e_i|\pi'(x) e_j'} \,dx\\
&=\sum_{i,j}\la_{i,j}\sp{e_i|\pi'(f)e_j'}.
\end{align*}
As $\pi(f)$, and hence $\pi'(f)$, is trace class, it follows from Lemma 5.3.5 of \cite{HA2}, that $\sum_{i,j}\left|\sp{e_i| \pi'(f)e_j'}\right|<\infty$ and so $\sum_{i,j}\left|\sp{e_i|\pi'(f) e_j'}\right|^2<\infty$ and by the Cauchy-Schwarz inequality we get 
$$
\sum_{i,j}|\la_{i,j}\sp{e_i|\pi'(f) e_j'}|\le \(\sum_{i,j}|\la_{i,j}|^2\)^{\frac12}\(\sum_{i,j}\left|\sp{e_i|\pi'(f) e_j'}\right|^2\)^{\frac12},
$$
so that $j$ extends to a continuous linear map $V_\pi\hat\otimes V_{\pi'}\to\CE$ which
must be injective, as $j\ne 0$ and $\pi\otimes\pi'$ is irreducible.
Lemma \ref{lem3.2} is proven.
\end{proof}

The addendum of the theorem becomes clear from this: The topology induced by the map $j$ on $V_\pi\hat\otimes V_{\pi'}$ is not the Hilbert topology if $V_\pi$ is infinite dimensional. For this let $(i_k)_{k\in\N}$ be a sequence of pairwise distinct indices. Then the sequence $v_k=e_{i_k}\otimes e_{i_k}'$ does not converge in $V_\pi\hat\otimes V_{\pi'}$, but we show that $j(v_k)$ tends to zero in $\CE$.
This means we have to show that for  given $f\in C_c^\infty(G)$ the sequence $\sp{e_{i_k}|\pi(f)e_{i_k}'}$ tends to zero. This, however is clear as $\pi(f)$ is trace class.

For the rest of the proof of Theorem \ref{thm3.1}, we first reduce to the case of a unimodular group.
So let $G$ be  locally compact and let $\Delta:G\to (0,\infty)$ be its modular function.
Let $H=G\ltimes\R$, where $G$ acts on $\R$ through $\Delta$, so as a set we have $H=G\times \R$ and the multiplication is $(g,x)(h,y)=(gh,\Delta(h^{-1})x+y)$.
Then $H$ is unimodular and the projection onto the first factor yields a surjective group homomorphism $\al:H\twoheadrightarrow G$.
So let $j:V_\pi\hat\otimes V_\eta\to C_c^\infty(G)'$. Integrating over $\R$ yields a continuous linear map $\al_*:C_c^\infty(H)\to C_c^\infty(G)$, which dualizes to $\al':C_c^\infty(G)'\to C_c^\infty(H)'$.
We can consider $\pi$ and $\eta$ as representations of $H$ and post-composing with $\al'$ we get a $H\times H$ map $j: V_\pi\hat\otimes V_\eta\to C_c^\infty(H)'$.
If we assume the theorem proven for the unimodular group $H$, we infer $\eta\cong\pi'$ and $\pi(f)$ trace class for every $f\in C_c^\infty(G)$.

So for the rest of the proof of Theorem \ref{thm3.1} we can assume $G$ to be unimodular.
For $f\in C_c^\infty(G)$ we define $f^*(x)=\ol{f(x^{-1})}$.
Then  for any unitary representation $\pi$ of $G$ we have 
$\pi(f^*)=\pi(f)^*$.

\begin{lemma}
\begin{enumerate}[\quad\rm (a)]
\item Let $G$ be a locally compact group and let $I:C_c^\infty(G)\to\C$ be a continuous, left-invariant linear functional, i.e., $I(L_yf)=I(f)$ holds for all $y\in G$, $f\in C_c^\infty(G)$.
Then there exists $c\in\C$ such that
$$
I(f)=c\int_Gf(x)\,dx.
$$
\item Let $G$ be unimodular and let $\tilde D:C_c^\infty(G)\otimes C_c^\infty(G)\to\C$ be a continuous linear map where the tensor product is equipped with the projective topology.
Assume that $\tilde D(R_yf\otimes L_yg)=\tilde D(f\otimes g)$ holds for all $f,g\in C_c^\infty(G)$.
Then $\tilde D$ factors over the convolution product, i.e., there exists a distribution $D$ on $G$ such that $\tilde D(f\otimes g)=D(f*g)$.
\end{enumerate}
\end{lemma}

\begin{proof}
(a) First note that for $F\in C_c^\infty(G\times G)$ the function $x\mapsto\int_GF(x,y)\,dy$ lies in $C_c^\infty(G)$ and one has $T_x\(\int_G F(x,y)\,dy\)=\int_G T_x(F(x,y))\,dy$.
For each open unit-neighborhood $U$ fix a function $\phi_U\in C_c^\infty(U)$ such that $\phi\ge 0$ and $\int_G\phi(x^{-1})\,dx=1$.
The set $\CU$ of all open unit-neighborhoods is a directed set with the inverse inclusion, i.e., $U\le V\ \Leftrightarrow\ U\supset V$.
Thus the map $U\mapsto \phi_U$ is a net in $C_c^\infty(G)$.
We claim that for every distribution $T$ and every $f\in C_c^\infty(G)$ the net of complex numbers $T(f*\phi_U)$ converges to $T(f)$.
For this let $\eps>0$, then by continuity of the right translation, there exists an open unit-neighborhood $U_0$ such that for $y\in U_0^{-1}$ one has $|T_x(f(xy)-f(x))|<\eps$.
Let $U\in\CU$ with $U\subset U_0$, then one has
\begin{align*}
|T(f*\phi_U)-T(f)|&=|T(f*\phi_U)-f)|\\
&=\left|T_x\(\int_Gf(xy)\phi_U(y^{-1})\,dy-\int_Gf(x)\phi(x^{-1})\,dx\)\right|\\
&=\left|T_x\(\int_{U^{-1}}(f(xy)-f(x))\phi_U(y^{-1})\,dy\)\right|\\
&=\left|\int_{U^{-1}}T_x(f(xy)-f(x))\phi_U(y^{-1})\,dy\right|\\
&\le\int_{U^{-1}}\underbrace{|T_x(f(xy)-f(x))|}_{<\eps}\phi_U(y^{-1})\,dy<\eps.
\end{align*}
Now let $I$ be a left-invariant distribution, then for $f,\phi\in C_c^\infty(G)$ one computes
\begin{align*}
I(f*\phi)&=I_x\(\int_Gf(y)\phi(y^{-1}x)dy\)\\
&= \int_Gf(y) I_x(\phi(y^{-1}x))dy\\
&= \int_Gf(y) I_x(\phi(x))dy= \int_Gf(y)dy\  I(\phi).
\end{align*}
If $\phi$ runs through the net $(\phi_U)_{U\in\CU}$ then the left hand side of this equation converges to $I(f)$.
Assuming $I\ne 0$ and choosing $f$ with $ 0\ne\int_Gf(x)\,dx$, we conclude that $I(\phi_U)$ converges to $c=\frac{I(f)}{\int_Gf(y)\,dy}$.
Finally, for any $f$,
$$
I(f)=\lim_UT(f*\phi_U)=\int_Gf(y)dy\lim_UI(\phi_U)=c\int_Gf(y)dy.
$$

(b)
The projective completion of the tensor product is $C_c^\infty(G\times G)$, so $\tilde D$ extends uniquely to a distribution on $G\times G$ which satisfies
$\tilde D(h(xa,a^{-1}y))=\tilde D(h(x,y))$ for every $a\in G$, or in other words $\tilde D\circ m_a^*=\tilde D$ for every $a\in G$, where $m_a:G\times G\to G\times G$, $(x,y)\mapsto (xa,a^{-1}y)$ and $m_a^*(f)=f\circ m_a$.

For a function $h$ on $G\times G$ we  write $L_ah(x,y)=h(x,a^{-1}y)$.
Consider the map $I:C_c^\infty(G\times G)\to C_c^\infty(G)$ given by $I(h)(x)=\int_Gh(x,y)\,dy$.
Let $T$ be a distribution on $G\times G$ which is invariant under left translations in the second argument, i.e., which satisfies $T(L_ah)=T(h)$ for all $a\in G$.
We claim that $T$ factors over $I$.
To see this, let $f,g\in C_c^\infty(G)$, then the function $f\otimes g(x,y)=f(x)g(y)$ lies in $C_c^\infty(G\times G)$.
For fixed $f$ the distribution $g\mapsto T(f\otimes g)$ is $G$ left-invariant, hence factors over the integral, i.e., there exists a number $D(f)\in\C$ such that $T(f\otimes g)=D(f)\int_Gg(y)\,dy$.
Fixing $g$ and varying $f$ one sees that the map $D:C_c^\infty(G)\to\C$ is linear and continuous and we have $T(h)=D(I(h))$ for every $h$ of the form $h=f\otimes g$ for some $f,g\in C_c^\infty(G)$. As the linear span of all functions of the form $f\otimes g$ is dense in $C_c^\infty(G\times G)$, we get $T(h)=D(I(h))$ for every test function  $h$ on $G\times G$.

Next consider the map $\mu:G\times G\to G\times G$, $(x,y)\mapsto (xy,y)$.
Then the inverse is $\mu^{-1}(x,y)=(xy^{-1},y)$, so $\mu$ is a homeomorphism. If  $f\in C_c^\infty(G\times G)$, then $\mu^*(f)=f\circ \mu$ again is a test function.
We have $\mu(m_a(x,y))=\mu((xa,a^{-1}y)=(xy,a^{-1}y)=l_a(xy,y)=l_a(\mu(x,y))$, where we have written $l_a(x,y)=(x,a^{-1}y)$. We then have $L_a=l_a^*$.
It follows that $m_a^*\mu^*=\mu^*L_a$.
Define $T=\tilde D\circ\mu^*$, then $TL_a=\tilde D\mu^*L_a=\tilde Dm_a^*\mu^*=\tilde D\mu^*=T$, so it follows that there exists a distribution $D$ such that $T=DI$ and hence
\begin{align*}
\tilde D(f\otimes g)&=T\mu^{-*}(f\otimes g)\\
&=D(I(\mu^{-*}(f\otimes g)))\\
&=D_x(\int_G\mu^{-*}(f\otimes g)(x,y)\,dy\\
&=D_x\(\int_G(f(xy^{-1})g(y)\,dy\)=D(f*g).\mqed
\end{align*}
\end{proof}

\begin{lemma}\label{lem3.3}
Let $G$ be a unimodular locally compact group and let $\pi,\eta\in\what G$.
Assume there exists a continuous linear $G\times G$-map $j:V_\pi\hat\otimes V_{\eta}\to\CE$.
\begin{enumerate}[\rm (a)]
\item
Then the prescription
$$
\sp{v,j^*\(\ol f\)}=\sp{f\mid jv}
$$
defines a continuous linear $G\times G$ map $j^*: C_c^\infty(G)\to V_{\pi}\hat\otimes V_{\eta}$.
Here $\ol f$ is the complex conjugate of $f$ and $\sp{.,.}$ denotes the inner product of the Hilbert space $V_\pi\hat\otimes V_\eta$, which we choose to be linear in the first slot.
\item There exists a distribution $D\in\CE$ such that
$$
D(f*g)=\sp{j^*(f),j^*(g^*)}.
$$
The distribution $D$ satisfies $D(f^*)=\ol{D(f)}$ and $D(f*g)=D(g*f)$.
The distribution $D$ is called the \e{reproducing distribution} of $j$.
\item One has $\eta\cong\pi'$.
\end{enumerate}\end{lemma}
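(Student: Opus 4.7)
My plan is as follows.

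For part (a), I would use the Riesz representation theorem. Since $j$ is continuous into $\CE$ equipped with the topology of pointwise convergence, for each fixed $f\in C_c^\infty(G)$ the functional $v\mapsto\sp{f\mid jv}$ is bounded linear on the Hilbert space $V_\pi\hat\otimes V_\eta$ and so has a unique Riesz representer, which we declare to be $j^*(\ol f)$. To see that $j^*$ is continuous into the Hilbert norm topology, observe that
\[
f\mapsto\norm{j^*(\ol f)}=\sup_{\norm v\le 1}|\sp{f\mid jv}|
\]
is a supremum of continuous seminorms on $C_c^\infty(G)$, hence continuous by the second statement of Proposition \ref{lem1.3}(a). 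The $G\times G$-equivariance of $j^*$ follows from that of $j$ via unitarity of $\pi\otimes\eta$; concretely $j^*(L_xg)=(\pi(x)\otimes\Id)j^*(g)$ and $j^*(R_yg)=(\Id\otimes\eta(y))j^*(g)$.

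For part (b), set $\tilde D(f\otimes g):=\sp{j^*(f),j^*(g^*)}$ on the algebraic tensor product. Using $(L_yg)^*=R_yg^*$ together with the equivariance formulas above, unitarity of $\eta$ gives $\tilde D(R_yf\otimes L_yg)=\tilde D(f\otimes g)$, which is the invariance hypothesis of the preceding lemma; applying it produces a distribution $D$ with $D(f*g)=\tilde D(f\otimes g)$. The relation $D(f^*)=\ol{D(f)}$ follows from $(f*g)^*=g^**f^*$ together with Hermitian symmetry of the inner product, after invoking $C_c^\infty(G)=C_c^\infty(G)*C_c^\infty(G)$ (Dixmier--Malliavin) to extend from convolutions to arbitrary test functions. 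For centrality, the parallel invariance $\tilde D(L_xf\otimes R_xg)=\tilde D(f\otimes g)$ (now exploiting unitarity of $\pi$), combined with the convolution identity $(L_xf)*(R_xg)=I_x(f*g)$, where $I_xh(u):=h(x^{-1}ux)$, gives $D(I_xh)=D(h)$, so $D$ is conjugation-invariant and hence central.

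Part (c) is the main substance. Fix a nonzero $v_0\in V_\pi$; for each $f\in C_c^\infty(G)$ the functional $w\mapsto j(v_0\otimes w)(f)$ on $V_\eta$ is bounded linear, so Riesz supplies a unique $\Theta_{v_0}(f)\in V_\eta$ with $\sp{w,\Theta_{v_0}(f)}=j(v_0\otimes w)(f)$. The map $\Theta_{v_0}$ is continuous, antilinear in both $v_0$ and $f$, and the equivariance of $j$ gives $\Theta_{v_0}(R_yf)=\eta(y)\Theta_{v_0}(f)$ together with $\Theta_{v_0}(L_xf)=\Theta_{\pi(x^{-1})v_0}(f)$. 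A Frobenius reciprocity argument---applied to the linear companion $f\mapsto\Theta_{v_0}(\ol f)$ via the formal density relation $F(u)=\eta(u^{-1})w_0$, and made rigorous through Dixmier--Malliavin combined with approximate identities---produces a unique $w_{v_0}\in V_\eta$ with $\Theta_{v_0}(f)=\eta(f^*)w_{v_0}$. Feeding this back into the left-equivariance forces $w_{\pi(x)v_0}=\eta(x)w_{v_0}$, so $v_0\mapsto w_{v_0}$ is a continuous antilinear $G$-intertwiner $V_\pi\to V_\eta$, which is nonzero because $j$ is. Composing with the canonical antilinear isomorphism $V_{\pi'}\to V_\pi$ produces a nonzero continuous linear $G$-intertwiner $V_{\pi'}\to V_\eta$, and Schur's lemma forces $\eta\cong\pi'$.

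The main obstacle I anticipate is the Frobenius step in (c): identifying continuous right-$G$-equivariant maps $C_c^\infty(G)\to V_\eta$ with vectors in $V_\eta$ is conceptually clear, but in the locally compact (possibly non-Lie) setting the continuity and approximation arguments need to be threaded through Dixmier--Malliavin, with careful bookkeeping of the two involutions $f^*$ and $\check f(x):=f(x^{-1})$ that both naturally appear.
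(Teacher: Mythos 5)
Your parts (a) and (b) are correct and follow the paper's own proof in essence: the paper dismisses (a) as ``clearly'' true, and your Riesz-representation plus supremum-of-continuous-seminorms argument (via Proposition~\ref{lem1.3}) is exactly what that unpacks to; your construction of $\tilde D$, the use of $(L_yg)^*=R_yg^*$ (unimodularity), the preceding factorization lemma, and the derivation of centrality and of $D(f^*)=\ol{D(f)}$ all match the paper.

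Part (c) is where you diverge, and there is a genuine gap. The paper argues globally: centrality of $D$ (the one place unimodularity is essential) shows $\ker j^*$ is $*$-stable, so $j^*(f)\mapsto j^*(f^*)$ is a well-defined antilinear isometry $\Theta$ of $V_\pi\hat\otimes V_\eta$ that interchanges the two $G$-factors; composing with the flip and the canonical antilinear map to the dual gives $\pi\otimes\eta\cong\eta'\otimes\pi'$, hence $\eta\cong\pi'$. Your route instead fixes $v_0$ and asserts that the right-equivariant continuous map $\Theta_{v_0}:C_c^\infty(G)\to V_\eta$ must have the form $f\mapsto\eta(f^*)w_{v_0}$ for some $w_{v_0}\in V_\eta$. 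This ``Frobenius'' step is not just bookkeeping to be threaded through Dixmier--Malliavin: the principle is false as stated. A continuous map $T:C_c^\infty(G)\to V_\eta$ with $T(R_yf)=\eta(y)T(f)$ is represented in general by a \emph{distribution vector} $\xi\in(V_\eta^\infty)'$, not by an element of $V_\eta$; for instance, for the Schr\"odinger representation of the Heisenberg group the choice $\xi=\delta_0\in\CS'(\R)\setminus L^2(\R)$ gives such a $T$ (since $\eta(f^*)$ maps $\CS'(\R)$ into $\CS(\R)\subset L^2(\R)$) which is represented by no $w\in L^2(\R)$. So your construction at best yields a nonzero antilinear intertwiner $V_\pi\to V_\eta^{-\infty}$, to which Schur's lemma for unitary representations does not apply; upgrading that to $\eta\cong\pi'$ needs a further closed-graph/polar-decomposition argument that you neither supply nor anticipate. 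A related warning sign is that your (c) makes no use of unimodularity, whereas the paper explicitly flags the centrality $D(f*g)=D(g*f)$ as crucial precisely for part (c). I would replace your (c) by the argument through the involution $\Theta$ on the image of $j^*$, which uses only part (b) and irreducibility.
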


The unimodularity of $G$ is necessary to assure $D(f*g)=D(g*f)$ which is crucial to the application in part (c).

\begin{proof}
The map $j^*$ is clearly well-defined, linear, $G\times G$-equivariant and continuous.
The image of $j^*$ is non-trivial and $G\times G$-stable, hence it is a dense subspace of $V_\pi\hat\otimes V_\eta$ as $\pi\otimes\eta$ is irreducible.
The map $(f, g)\mapsto \sp{j^*(f),j^*(g^*)}$ is a continuous bilinear map on $C_c^\infty(G)\times C_c^\infty(G)$, so it induces a continuous linear $\tilde D$ map on $C_c^\infty(G)\otimes C_c^\infty(G)$, where the latter is equipped with the projective topology.
Denote the left and right actions of $G$ on $C_c^\infty(G)$ by $L$ and $R$, so $L_yf(x)=f(y^{-1}x)$ and $R_yf(x)=f(xy)$. Then, as $G$ is unimodular, $(L_yg)^*=R_yg^*$ so that
$
\tilde D(R_yf\otimes L_yg)=\tilde D(f\otimes g)
$
holds for all $y\in G$.
By the last lemma,
this implies that $D$ factors over the convolution product, so  there exists a distribution $D\in \CE$ such that 
$\tilde D(f\otimes g)=D(f*g)$, i.e.,
$$
D(f*g)=\sp{j^*(f),j^*(g^*)}.
$$
Further, as also $\tilde D(L_y f\otimes R_yg)=\tilde D(f\otimes g)$ we conclude that the distribution $D$ is invariant under $G$-conjugation, which implies that
$$
D(f*g)=D(g*f)
$$
holds for all $f,g\in C_c^\infty(G)$.
Also, as 
$$
D\((f*g)^*\)=D(g^**f^*)=\sp{j^*(g^*),j^*(f)}=\ol{D(f*g)}
$$ 
we conclude that 
$$
D(f^*)=\ol{D(f)}
$$
holds for every $f\in C_c^\infty(G)$.

Now if $f\in C_c^\infty(G)$ lies in the kernel of $j^*$, then it follows that $D(f*g)=0$ for all $g\in C_c^\infty(G)$. This implies $D(g*f)=0$ and this implies that $j^*(f^*)=0$, so the kernel of $j^*$ is stable under $f\mapsto f^*$,
Therefore the map
\begin{align*}
\Theta: \Im(j^*)&\to \Im(j^*),\\
j^*(f)&\mapsto j^*(f^*)
\end{align*}
is well defined. It is anti-linear and by
\begin{align*}
\sp{\Theta(j^*(f)),\Theta(j^*(g))}
&= \sp{j^*(f^*),j^*(g^*)}\\
&= D(f^**g)=\ol{D(g^**f)}=\ol{D(f*g^*)}\\
&= \ol{\sp{j^*(f),j^*(g)}}
\end{align*}
it is an isometry, so it extends to an anti-linear isometry on the entire Hilbert space $V_\pi\hat\otimes V_\eta$.
Finally, for $x,y\in G$ we get
\begin{align*}
\Theta(\pi(x)\otimes\eta(y) j^*(f))
&= \Theta(j^*(L_xR_y f))\\
&= j^*(R_xL_y f^*)= \pi(y)\otimes\eta(x) \Theta(j^*(f)).
\end{align*}
This implies that $\Theta$ followed by the flip $v\otimes w\mapsto w\otimes v$ is an antilinear $G\times G$ isomorphism $V_\pi\hat\otimes V_\eta\tto\cong V_\eta\hat\otimes V_\pi$.
Followed by the antilinear isomorphism from a Hilbert space to its dual we get a $G\times G$ isomorphism
$$
V_\pi\hat\otimes V_\eta\tto\cong V_{\eta'}\hat\otimes V_{\pi'},
$$
and hence $\eta'\cong\pi$ and the proof of Lemma \ref{lem3.3} is finished.
\end{proof}

In order to proceed, we need traces on von Neumann algebras. 
Let $H$ be a Hilbert space and let $\CA\subset \CB(H)$ be a von Neumann algebra. As in \cite{HA2}, we write $\CA^\circ$ for its commutant.
Write $\CA^+$ for the set of elements of the form $T^*T$, where $T\in\CA$.
Note that $\CA^+$ is closed under addition, as for $a,b\in \CA$ the operator $T=a^*a+b^*b$ is positive, therefore has a positive square root $S=\sqrt T$, which lies in $\CA$, as $\CA$ is weakly closed \cite{HA2}.
Recall that a \e{trace} on $\CA$ is a map $\tau:\CA^+\to [0,\infty]$ subject to the following conditions
\begin{enumerate}[\rm (i)]
\item $\tau(S+T)=\tau(S)+\tau(T)$,
\item $\tau(\la S)=\la\tau(S)$,
\item $\tau(USU^{-1})=\tau(S)$
\end{enumerate}
for all $S,T\in\CA^+$, all $\la\ge 0$ and all unitary $U\in\CA$.
The trace is called \e{faithful}, if $\tau(S)=0$ implies $S=0$.
It is called \e{finite}, if $\tau(S)<\infty$ holds for every $S\in\CA^+$.
Further, $\tau$ is called \e{semi-finite}, if for every $0\ne S\in\CA^+$ there exists $T\in\CA^+$ with $T\le S$ and $0<\tau(T)<\infty$.
Here $T\le S$ means $S-T\in\CA^+$. This establishes a partial order on $\CA$.
The trace $\tau$ is called \e{normal}, if for every subset $\CF\subset\CA^+$ with $\sup\CF=S\in\CA$ one has $\tau(S)=\sup_{T\in\CF}\tau(T)$.

Recall that a von Neumann algebra is called a \e{factor}, if its center equals $\C \Id$.
In \cite{DixvN} 6.4, Corollary to Theorem 3, it is shown that on a factor any two  faithful, semi-finite normal traces are proportional.
For instance on $\CA=\CB(H)$ the standard trace
$\tr$, with
$$
\tr(T)=\sum_j\sp{Te_j,e_j}
$$
for any orthonormal basis $(e_j)$ is a faithful semi-finite normal trace and any other therefore is a positive multiple of $\tr$.

For any algebra $\CA$ write $\CA^2$ for the span of all elements of the form $ab$, with $a,b\in \CA$.

\begin{definition}
A $*$-algebra $\CH_0$, which satisfies $\CH_0^2=\CH_0$, is called a \e{Hilbert algebra}, if it is endowed with an inner product $(.\mid.)$ such that for all $x,y,z\in\CH_0$ one has
\begin{enumerate}[\rm (i)]
\item $(x\mid y)=(y^*\mid x^*)$,
\item $(xy\mid z)=(x\mid zy^*)$,
\item the map $R_x:h\mapsto hx$ (and hence $L_x:h\mapsto xh$) is a bounded linear operator on $\CH_0$.
\end{enumerate}
\end{definition}

Let $\CH_0$ be a Hilbert algebra and denote by $\CH$ its Hilbert space completion.
For $x\in \CH_0$, the operators $R_x$ and $L_x$ extend to continuous operators on $\CH$.
Let $\CR,\CL\subset\CB(\CH)$ denote the von Neumann algebras generated by all $R_x$, $x\in\CH_0$ and $L_x$, $x\in\CH_0$ respectively.
Then by Theorem \cite{DixvN} I.5.2 we have
$$
\CR^\circ=\CL,\qquad \CL^\circ=\CR.
$$
Theorem I.5.8 of \cite{DixvN} says, that on a Hilbert algebra $\CH_0$ there exists a faithful, semi-finite normal trace $\tau_r$ on $\CR$ and likewise $\tau_l$ on $\CL$ such that
$$
\tau_r(R_y^*R_x)=(x\mid y)\quad\text{and}\quad \tau_l(L_y^*L_x)=(x\mid y).
$$

\begin{lemma}\label{lem.5.6}
Let $G$ be a unimodular locally compact group and let $\pi\in\what G$.
Assume that $\pi\otimes\pi'$ can be realized in $\CE$.
Then $\pi$ is trace class.
\end{lemma}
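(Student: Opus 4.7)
The plan is to turn a suitable quotient of $C_c^\infty(G)$ into a Hilbert algebra in the sense of the definition above, so that the associated left von Neumann algebra $\CL$ gets identified with $\CB(V_\pi)\otimes\C\Id$ acting on $V_\pi\hat\otimes V_{\pi'}$. The uniqueness of faithful semi-finite normal traces on a type I factor will then force the canonical Hilbert algebra trace to be a positive scalar multiple of the standard trace on $V_\pi$, from which it will follow that each $\pi(f)$ is Hilbert--Schmidt and hence, by Proposition \ref{prop1.6}, trace class.

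Concretely, starting from the reproducing distribution $D$ of Lemma \ref{lem3.3}, I will define on $C_c^\infty(G)$ the semi-inner product $(f\mid g):=D(f*g^*)=\sp{j^*(f),j^*(g)}$. The identities $D(f^*)=\ol{D(f)}$ and $D(f*g)=D(g*f)$, together with unimodularity (which gives $(a*b)^*=b^**a^*$), yield the Hilbert algebra axioms $(f\mid g)=(g^*\mid f^*)$ and $(fg\mid h)=(f\mid hg^*)$ by direct calculation. The null space $\CN=\ker(j^*)$ is a two-sided $*$-ideal --- its stability under $*$ is precisely the content of the well-definedness of the map $\Theta$ from the proof of Lemma \ref{lem3.3} --- so the quotient $\CH_0=C_c^\infty(G)/\CN$ carries a $*$-algebra structure inherited from convolution, and Dixmier--Malliavin gives $\CH_0^2=\CH_0$. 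The Hilbert space completion $\CH$ identifies with the closure of $\Im(j^*)$ in $V_\pi\hat\otimes V_{\pi'}$, which, being non-zero and $G\times G$-stable, must equal the whole space by irreducibility of $\pi\otimes\pi'$. The identity $j^*(f*g)=(\pi(f)\otimes\Id)\,j^*(g)$ then identifies the left convolution operator $L_f$ on $\CH$ with the bounded operator $\pi(f)\otimes\Id$, verifying the remaining Hilbert algebra axiom (iii).

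Consequently the left von Neumann algebra $\CL\subset\CB(\CH)$ is the weak closure of $\{\pi(f)\otimes\Id: f\in C_c^\infty(G)\}$, which equals $\pi(G)''\otimes\C\Id=\CB(V_\pi)\otimes\C\Id$ by irreducibility of $\pi$. This is a type I factor, so by the uniqueness statement cited from \cite{DixvN} above, the canonical faithful semi-finite normal trace $\tau_l$ on $\CL$ corresponds, under the isomorphism $\CL\cong\CB(V_\pi)$, to a positive multiple of the standard trace; this yields a constant $c>0$ with $\tau_l(L_f^*L_f)=c\,\norm{\pi(f)}_{HS}^2$ for every $f\in C_c^\infty(G)$. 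On the other hand Dixmier's formula gives $\tau_l(L_f^*L_f)=(f\mid f)=\norm{j^*(f)}^2<\infty$. Hence $\pi(f)$ is Hilbert--Schmidt for every test function, and $\pi$ is trace class by Proposition \ref{prop1.6}.

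The main obstacle will be the bookkeeping around the Hilbert algebra construction, particularly the verification that $\CN$ is a two-sided $*$-ideal on which convolution and involution descend coherently, and the identification of the completion $\CH$ with $V_\pi\hat\otimes V_{\pi'}$. The indispensable ingredient throughout is the commutativity relation $D(f*g)=D(g*f)$, which is the point at which the unimodularity hypothesis enters.
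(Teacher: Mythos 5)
Your proposal is correct and follows essentially the same route as the paper's own proof: the reproducing distribution $D$ from Lemma \ref{lem3.3} is used to make $j^*(C_c^\infty(G))$ (equivalently your quotient $C_c^\infty(G)/\ker j^*$) a Hilbert algebra, the left von Neumann algebra is identified with $\CB(V_\pi)\otimes\C\Id$ via $L_{j^*(f)}=\pi(f)\otimes\Id$, and the uniqueness of the faithful semi-finite normal trace on this factor forces $\norm{\pi(f)}_{HS}^2$ to be a positive multiple of $(f\mid f)<\infty$, whence trace class by Proposition \ref{prop1.6}. The only cosmetic difference is that you work with the quotient algebra while the paper works directly with the image of $j^*$; these are the same object.
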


After Lemmas \ref{lem3.2} - \ref{lem3.3}, this is the last assertion of Theorem \ref{thm3.1}.
So with its proof, the proof of Theorem \ref{thm3.1} will be complete.

\begin{proof}[Proof of Lemma \ref{lem.5.6}]
Let $(\pi,V_\pi)$ be in $\what G$ and let $j:V_\pi\hat\otimes V_{\pi'}\to \CE$ a continuous linear $G\times G$-map.
Let $D$ denote the  reproducing distribution of $j$.
A calculation shows that $jj^*(f)=D*f$ for $f\in C_c^\infty(G)$.
As $j$ is injective, this implies that the kernel $J$ of $j^*$ is a right ideal in the convolution algebra $C_c^\infty(G)$.
On the other hand, from 
\begin{align*}
\sp{g|jj^*(f)}&=\sp{j^*(f),j^*(\ol g)}\\
&=D(f*g^\vee)=D(g^\vee *f)=\ol{D(f^**\ol g)}\\
&=\ol{\sp{g^*|jj^*(f^*)}}
\end{align*}
we deduce that the ideal $J$ is stable under the involution *, hence a two-sided *-ideal, 
so $\CH_0=j^*(C_c^\infty(G))\subset V_\pi\hat\otimes V_{\pi'}$ inherits the structure of a *-algebra.
The inner product of $V_\pi\otimes V_{\pi'}$ turns $\CH_0$ into a Hilbert algebra as is easily verified.
As an example, we verify that $(xy\mid z)=(x\mid zy^*)$ holds. Let $x=j^*(f)$, $y=j^*(g)$ and $z=j^*(h)$, then the claim follows from
$$
\sp{j^*(f*g),j^*(h)}=D(f*g*h^*)=D(f*(h*g^*)^*)=\sp{j^*(f),j^*(h*g^*)}.
$$
As $j^*$ is continuous, we can interchange it with the convolution integral to get
$$
j^*(f)\cdot j^*(g)=j^*(f*g)=\int_G f(y)j^*(L_yg)\,dy=[\pi(f)\otimes\Id]j^*(g).
$$
Therefore the von Neumann algebra $\CL$ is generated by the operators $\pi(f)\otimes \Id$ and also by the operators $\pi(y)\otimes\Id$ for $y\in G$, by which it follows that $\CL$ is a factor, and indeed, it is isomorphic with $\CB(V_\pi)$, therefore the essentially unique faithful semi-finite normal trace $\tau_l$ coincides with a positive multiple of the standard trace on $\CB(V_\pi)$.
So there exists $c>0$ such that for $f\in C_c^\infty(G)$ one has
$$
\tr\pi(f)^*\pi(f)=c\tau_l(L_f^*L_f)=c(f\mid f)<\infty
$$
therefore $\pi(f)$ is a Hilbert-Schmidt operator and the claim follows.
The lemma and therefore the theorem are proven.
\end{proof}

\section{Multiplicity free pairs}
Let $G$ be a locally compact group and $K$ a compact  subgroup.
Any irreducible unitary representation $\pi$ of $G$ decomposes, when restricted to $K$, as a direct sum of irreducible representations $\tau$ of $K$.
Thus we have a well-defined multiplicity, denoted $m(\pi,\tau)$.
The pair $(G,K)$ is called a \e{multiplicity free pair}, if  $m(\pi,\tau)\le 1$ holds for all $\pi\in\what G$ and all $\tau\in\what K$.
It is called a \e{Gelfand pair}, if $m(\pi,\triv)\le 1$ for each $\pi\in\what G$.
The following proposition is in \cite{Koorn}.

\begin{proposition}
[Koornwinder]
Consider $K$ as a subgroup of $G\times K$ via the diagonal embedding.
Then the pair $(G,K)$ is multiplicity free if and only if the pair $(G\times K,K)$ is a Gelfand pair.
\end{proposition}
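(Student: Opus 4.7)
The plan is to reduce both sides of the equivalence to the same numerical condition by computing, for each $\sigma \in \what{G\times K}$, the dimension of the space $\sigma^K$ of $K$-invariant vectors when $K$ is embedded diagonally in $G\times K$. The whole argument rests on an outer-tensor-product description of $\what{G\times K}$.

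First I would show that $(\pi,\tau)\mapsto \pi\boxtimes\tau$ is a bijection between $\what G\times\what K$ and $\what{G\times K}$. That $\pi\boxtimes\tau$ is irreducible is standard via Schur's lemma together with the commutation theorem for von Neumann tensor products. For the converse direction, compactness of $K$ provides a $K$-isotypic decomposition $V_\sigma=\bigoplus_{\tau\in\what K}V_\sigma[\tau]$; since $G$ and $K$ commute inside $G\times K$, each isotypic is $G$-stable, so irreducibility of $\sigma$ forces exactly one isotypic to be nonzero. Writing that isotypic as $U_\tau\otimes\Hom_K(U_\tau,V_\sigma)$ and checking that the induced $G$-action on the second factor is irreducible and unitary identifies $\sigma$ with $\pi\boxtimes\tau$ for unique $\pi\in\what G$ and $\tau\in\what K$.

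Next, restricting $\sigma=\pi\boxtimes\tau$ to the diagonal $K$ gives $\pi|_K\hat\otimes\tau$. Decomposing $\pi|_K$ into $K$-isotypics and using that $\tau$ is finite-dimensional, only the $\tau'$-isotypic of $\pi|_K$ can pair nontrivially with $\tau$ to produce $K$-invariants, so
$$
(\pi|_K\hat\otimes\tau)^K \;=\; \Hom_K(\tau',\pi|_K),
$$
a space of dimension $m(\pi,\tau')$. Hence $(G\times K,K)$ is a Gelfand pair precisely when $m(\pi,\tau')\le 1$ for all $\pi\in\what G$ and $\tau\in\what K$, which, since $\tau\mapsto\tau'$ permutes $\what K$, is exactly the multiplicity-free condition for $(G,K)$. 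The main obstacle is the tensor-product decomposition in the first step when $G$ is not a priori type I, but the isotypic argument forced by the compactness of $K$ circumvents any such assumption on $G$.
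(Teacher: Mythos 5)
Your proof is correct and follows essentially the same route as the paper: both reduce the claim to the computation $\dim(\pi\otimes\tau)^K=m(\pi,\tau^*)$ obtained by decomposing $\pi|_K$ into $K$-isotypic components and observing that $(\gamma\otimes\tau)^K$ is one-dimensional exactly when $\gamma$ is the dual of $\tau$. The only difference is that you justify the outer-tensor-product description of $\widehat{G\times K}$ via the isotypic argument (avoiding any type~I hypothesis on $G$), which the paper simply asserts; this is a welcome but inessential elaboration.
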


\begin{proof}
As the representations are finite-dimensional, we have for $\ga,\tau\in\what K$ that $\ga\otimes\tau\cong \Hom_\C(\ga,\tau^*)$ and so  
$$
\dim(\ga\otimes\tau)^K=\begin{cases}1&\ga=\tau^*,\\
0&\ga\ne\tau^*.\end{cases}
$$
Any irreducible representation of $G\times K$ is a tensor product $\pi\otimes\tau$ of some $\pi\in\what G$ and $\tau\in\what K$.
The proposition now follows from 
\begin{align*}
\dim (\pi\otimes\tau)^K=\sum_{\ga\in\what K} m(\pi,\ga)\dim(\ga\otimes\tau)^K=m(\pi,\tau^*).\mqed
\end{align*}
\end{proof}

We now consider the case of a Lie group $G$ with finitely many connected components and an arbitrary closed subgroup $H$.
For $(\pi,V_\pi)\in\what G$ let $V_\pi^\infty$ be the space of smooth vectors, i.e., the space of all $v\in V_\pi$ such that the map $G\to V_\pi$, $x\mapsto\pi(x)v$ is infinitely differentiable.
This space is a Fr\'echet space equipped with the semi-norms
$$
N_D(v)=\norm{\pi(D)v},
$$
where $D$ runs in $U(\g)$, the universal enveloping algebra of the Lie algebra $\g$ of $G$.

By the Theorem of Dixmier and Malliavin \cite{DM}, we have that the space $V_\pi^\infty$ equals $\pi(C_c^\infty(G))V_\pi$.
We denote by $\pi^\infty$  the restriction of $\pi$ to $V_\pi^\infty$.
For $(\tau,V_\tau)\in\what H$ let
$$
m(\pi,\tau)=\dim\Hom_{H,\cont}(V_\pi^\infty,V_\tau),
$$
where $\Hom_{H,\cont}(V_\pi^\infty,V_\tau)$ denotes the space of continuous linear maps commuting with the $H$-action.

The pair $(G,H)$ is called \e{multiplicity free}, if $m(\pi,\tau)\le 1$ for all $\pi\in\what G$ and all $\tau\in\what H$.
It is called a \e{Gelfand pair} if $m(\pi,\triv)\le 1$ for all $\pi\in\what G$.

\begin{theorem}
Let $G$ be a unimodular Lie group with finitely many connected components and let $H$ be a closed unimodular subgroup. Assume that $G$ is trace class.
Then
\begin{center}
$(G,H)$ is multiplicity free\quad$\Leftrightarrow$\quad
$(G\times H,H)$ is a Gelfand pair.
\end{center}
\end{theorem}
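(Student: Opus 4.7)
The plan is to mimic the preceding proposition (the Koornwinder/compact case), upgrading the finite-dimensional identity $\dim(\gamma\otimes\tau)^K=\delta_{\gamma,\tau^*}$ to the infinite-dimensional setting using the trace class hypothesis. Since $G$ is trace class, Theorem \ref{thm1.7} gives that $G$ is of type I; hence every irreducible unitary representation of $G\times H$ one needs to test against the Gelfand pair condition is an exterior tensor product $\pi\hat\otimes\tau$ with $\pi\in\what G$, $\tau\in\what H$. Applying the Dixmier-Malliavin theorem on $G\times H$, the $G\times H$-smooth vectors of $\pi\hat\otimes\tau$ identify with the projective tensor product $V_\pi^\infty\hat\otimes_\pi V_\tau^\infty$.

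By the universal property of the projective tensor product, a continuous $H$-invariant (under the diagonal action) linear form $L$ on $V_\pi^\infty\hat\otimes_\pi V_\tau^\infty$ corresponds via $T_L(v)(w)=L(v\otimes w)$ to a continuous $H$-equivariant map $T_L:V_\pi^\infty\to(V_\tau^\infty)'$, where the target carries the contragredient action $(h\alpha)(w)=\alpha(h^{-1}w)$ and is the distribution-vector space $V_{\tau'}^{-\infty}$ of the dual representation $\tau'$. The natural inclusion $V_{\tau'}\hookrightarrow V_{\tau'}^{-\infty}$ gives an injection
\[
\Hom_{H,\cont}(V_\pi^\infty,V_{\tau'})\hookrightarrow\Hom_{H,\cont}(V_\pi^\infty,V_{\tau'}^{-\infty}),
\]
which yields the easy direction immediately: if $(G\times H,H)$ is Gelfand, then the right-hand side has dimension at most one for each $\tau$, and since $\tau\mapsto\tau'$ is a bijection of $\what H$, we obtain $m(\pi,\tau')\le 1$ for all $\pi,\tau$, so $(G,H)$ is multiplicity free.

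The main direction is the converse. Assuming $\dim\Hom_{H,\cont}(V_\pi^\infty,V_{\tau'})\le 1$ for all $\pi,\tau$, I would show that every continuous $H$-equivariant $T:V_\pi^\infty\to V_{\tau'}^{-\infty}$ actually factors through the Hilbert subspace $V_{\tau'}$: fix an approximate identity $(\phi_n)\subset C_c^\infty(H)$ and smooth $T(v)$ by setting $T_n(v)=\tau'(\phi_n)T(v)\in V_{\tau'}^\infty\subset V_{\tau'}$. Using Dixmier-Malliavin on $G$ to write $v=\sum_j\pi(f_j)u_j$ with $f_j\in C_c^\infty(G)$, the trace class property of $\pi$ together with the continuity estimates of Proposition \ref{lem1.3} (in particular continuity of the Hilbert-Schmidt semi-norm $f\mapsto\norm{\pi(f)}_{HS}$) should force $T_n(v)$ to be Cauchy in the Hilbert norm of $V_{\tau'}$, with limit $\tilde T(v)$ independent of the choice of $(\phi_n)$; the unimodularity of $G$ and of $H$ enters to make the convolutions behave symmetrically under the involution. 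The lift $\tilde T:V_\pi^\infty\to V_{\tau'}$ is continuous and $H$-equivariant, and yields the identification
\[
\dim\Hom_{H,\cont}\bigl((V_\pi\hat\otimes V_\tau)^\infty,\C\bigr)=m(\pi,\tau'),
\]
establishing the Gelfand pair condition. I expect this lifting to be the main obstacle: it is genuinely an automatic-smoothness statement, and without the trace class hypothesis on $G$ (used to control the smoothing estimates uniformly in $v$) it would typically fail.
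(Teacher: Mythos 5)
You should first be aware that the paper does not actually prove this theorem: its entire proof is the citation ``This is the main result of \cite{Multfree}.'' So there is no argument in the text to match yours against; what you have reconstructed is the natural framework and is presumably close to the cited source. The first half of your proposal is sound in outline: trace class implies type I (Theorem \ref{thm1.7}), so every element of $\what{G\times H}$ is of the form $\pi\hat\otimes\tau$; a continuous, diagonally $H$-invariant functional on the smooth vectors of $\pi\hat\otimes\tau$ corresponds to a continuous $H$-map $V_\pi^\infty\to V_{\tau'}^{-\infty}$; and the inclusion $V_{\tau'}\hookrightarrow V_{\tau'}^{-\infty}$ gives ``Gelfand $\Rightarrow$ multiplicity free.'' Even here, one technical point deserves more than a nod: the identification of $(\pi\hat\otimes\tau)^\infty$ with the completed projective tensor product $V_\pi^\infty\hat\otimes_\pi V_\tau^\infty$ is not a formal consequence of Dixmier--Malliavin, and it is exactly what you need in order to pass functionals back and forth; you should prove it or locate a reference.

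The hard direction has a genuine gap. Your plan is to show that \emph{every} continuous $H$-map $T:V_\pi^\infty\to V_{\tau'}^{-\infty}$ factors through $V_{\tau'}$, by smoothing with an approximate identity and arguing that $T_n(v)=\tau'(\phi_n)T(v)$ is Cauchy in the Hilbert norm. But for a fixed distribution vector $\alpha\in V_{\tau'}^{-\infty}$, the net $\tau'(\phi_n)\alpha$ is Cauchy in the norm of $V_{\tau'}$ if and only if $\alpha$ already lies in $V_{\tau'}$: Cauchy-ness in the Hilbert norm is literally equivalent to the conclusion you are trying to reach, so the smoothing device begs the question unless an independent a priori bound on $\norm{\tau'(\phi_n)T(v)}$ is produced. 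No such bound is indicated: the trace class hypothesis controls the operators $\pi(f)$ for $f\in C_c^\infty(G)$ acting on $V_\pi$, and Proposition \ref{lem1.3} gives continuity of $f\mapsto\norm{\pi(f)}_{HS}$ on $C_c^\infty(G)$, but neither says anything about norms in the $H$-representation $V_{\tau'}$. Note also that your argument never invokes the multiplicity-free hypothesis, so you are implicitly claiming the unconditional identity $\Hom_{H,\cont}(V_\pi^\infty,V_{\tau'}^{-\infty})=\Hom_{H,\cont}(V_\pi^\infty,V_{\tau'})$, which is strictly stronger than the stated equivalence of two ``at most one'' conditions; if that is what you believe, it requires its own proof (this comparison is essentially the content of \cite{Multfree}, where it is carried out via reproducing distributions and distributional realizations of the kind developed in Section 5 of this paper, not by approximate-identity smoothing). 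As it stands, the key analytic step is a hope rather than an argument.
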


\begin{proof}
This is the main result of \cite{Multfree}.
\end{proof}

{\bf Remark.}
The last theorem also allows for examples like the Mautner group inside $\C^2\rtimes\R^2$.
More precisely, let $G=\C^2\rtimes\R^2$, where $\R^2$ acts on $\C^2$ via the matrix representation $(s,t)\mapsto \smat{e^{2\pi is}}\ \ {e^{2\pi i t}}$. Choose $\al,\beta\in\R^\times$ with $\al/\beta\notin\Q$ and let $H=\C^2\rtimes L\subset G$, where $L=\R(\al,\beta)\subset\R^2$.
Then $G$ is Mackey-regular but $H$ is not and $H$ is not type I.
But the pair $(G,H)$ is multiplicity-free, as one concludes from the fact that for $(z,w)\in\C^2$ the $L$-orbit $L(z,w)$ is dense in the $\R^2$-orbit.

{\small {\it Anton Deitmar:} Math. Inst.,
Auf der Morgenstelle 10,
72076 T\"ubingen,
Germany.
{\tt deitmar@uni-tuebingen.de}\\
{\it Gerrit van Dijk:} 
Math. Inst., Niels Bohrweg 1, 2333 CA Leiden, The Netherlands. {\tt dijk@math.leidenuniv.nl}}

\end{document}